\documentclass[preprint,12pt]{elsarticle}




\usepackage{amssymb}
\usepackage{amsthm}
\usepackage{graphics}
\usepackage{epsfig}
\usepackage{amssymb}
\usepackage{graphicx}
\usepackage{subfigure}
\usepackage{color}
\usepackage{tikz}
\usetikzlibrary{patterns}
\usepackage{hyperref}
\usepackage{a4wide}
\usepackage{amsmath}
\usepackage{amsfonts}
\usepackage{enumerate}

\newtheorem*{theoremaux}{Theorem \theoremauxnum}
\gdef\theoremauxnum{1}

\newtheorem{lemma}{\bf Lemma}[section]
 
\newtheorem{theorem}{\bf Theorem}[section]
\newtheorem{proposition}[lemma]{\bf Proposition}

\newtheorem{definition}{\bf Definition}[section]






\journal{~}

\begin{document}

\begin{frontmatter}



\title{On Perfectness of Annihilating-Ideal Graph of $\mathbb{Z}_n$}



\author{Manideepa Saha}
\ead{manideepasaha1991@gmail.com }
\author{Sucharita Biswas}
\ead{biswas.sucharita56@gmail.com}
\author{Angsuman Das\corref{cor1}}
\ead{angsuman.maths@presiuniv.ac.in}
\address{Department of Mathematics, Presidency University, Kolkata, India}

\cortext[cor1]{Corresponding author}

\begin{abstract}
The annihilating-ideal graph of a commutative ring $R$ with unity is defined as the graph $\mathbb{AG}(R)$ with the vertex set is the set of all non-zero ideals with non-zero annihilators and two distinct vertices $I$ and $J$ are adjacent if and only if $IJ = 0$. Nikandish {\it et.al.} proved that $\mathbb{AG}(\mathbb{Z}_n)$ is weakly perfect. In this short paper, we characterize $n$ for which $\mathbb{AG}(\mathbb{Z}_n)$ is perfect.
\end{abstract}

\begin{keyword}
annihilator \sep perfect graph \sep ideals 
\MSC[2008] 05C25, 05C17 

\end{keyword}

\end{frontmatter}

\section{Introduction}
Over the last two decades, various graphs defined on rings has become an interesting topic of research. Various graphs like \cite{angsu-armendariz},\cite{anderson-livingston},\cite{badawi1},\cite{badawi2},\cite{badawi3},\cite{1st-paper},\cite{angsu-bedanta},\cite{mks-ideal} have been constructed to study the interplay between the graph theoretic and ring-theoretic properties. Interested readers are referred to the following surveys \cite{survey1},\cite{survey2} on graphs defined on rings. One such graph is the annihilating-ideal graph $\mathbb{AG}(R)$ of a commutative ring $R$, introduced by \cite{1st-paper}.

\begin{definition}\cite{1st-paper} Let $R$ be a commutative ring with unity. The annihilating-ideal graph of $R$ is defined as the graph $\mathbb{AG}(R)$ with the vertex set is the set of all non-zero ideals with non-zero annihilators and two distinct vertices $I$ and $J$ are adjacent if and only if $IJ = 0$.
\end{definition}

In \cite{weakly-perfect}, the authors proved that $\mathbb{AG}(\mathbb{Z}_n)$ is {\it weakly perfect}, i.e., its clique number $\omega$ is equal to its chromatic number $\chi$. 

A graph $G$ is said to be {\it perfect} if $\omega(H)=\chi(H)$ holds for all induced subgraphs $H$ of $G$. Perfect graphs play an important role in graph theory, as many hard graph problems in general like graph coloring, finding maximum clique and independent set, etc. can be solved in polynomial-time in case of perfect graphs. Thus characterizing perfect graphs in different families \cite{ebrahimi}, \cite{angsu-intersection-perfect} is an important issue. In this short paper, we characterize $n$ for which $\mathbb{AG}(\mathbb{Z}_n)$ is perfect. The following theorem is the main result of the paper:

\begin{theorem}\label{main-theorem}
$\mathbb{AG}(\mathbb{Z}_n)$ is perfect if and only if $n$ is one of the form $p^{\alpha_1}_1,~ p^{\alpha_1}_1p^{\alpha_2}_2,~p^{\alpha_1}_1p_2p_3$ or $p_1p_2p_3p_4$, where $p_i$'s are distinct primes and $\alpha_{i}\in \mathbb{N}$.
\end{theorem}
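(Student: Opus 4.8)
The plan is to translate the problem into lattice combinatorics and then invoke the Strong Perfect Graph Theorem. Writing $n=\prod_{i=1}^k p_i^{\alpha_i}$, the vertices of $\mathbb{AG}(\mathbb{Z}_n)$ are the divisors $d$ with $1<d<n$, which I encode as exponent vectors $v=(v_1,\dots,v_k)$ with $0\le v_i\le\alpha_i$ and $v\ne(0,\dots,0),(\alpha_1,\dots,\alpha_k)$; two vertices are adjacent exactly when $v_i+w_i\ge\alpha_i$ for every $i$ (i.e. $n\mid d_v d_w$). Let $s$ be the number of primes with $\alpha_i=1$ and $t$ the number with $\alpha_i\ge 2$. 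I will prove the cleaner equivalent statement that $\mathbb{AG}(\mathbb{Z}_n)$ is perfect iff $s+2t\le 4$, and then note that $s+2t\le 4$ holds precisely for the four listed shapes. By the Strong Perfect Graph Theorem it suffices to control induced odd holes $C_{2\ell+1}$ and odd antiholes $\overline{C_{2\ell+1}}$ with $\ell\ge 2$.

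The engine for both directions is a per-coordinate ``conflict'' analysis. The graph is the intersection over the $k$ coordinates of the threshold relations $R_c=\{v\sim w:\ v_c+w_c\ge\alpha_c\}$. For a would-be induced $C_m$ on a vertex set $U$, every cycle edge must lie in all $R_c$ while every diagonal must fail in some $R_c$; dualizing, the diagonal graph $\overline{C_m}$ must be covered by the $k$ threshold graphs $\overline{R_c|_U}$, each avoiding all cycle edges. A short check shows a coordinate with $\alpha_c=1$ contributes a clique on a set that is independent in the cycle, while a coordinate with $\alpha_c\ge2$ contributes an arbitrary threshold subgraph; in particular each exponent-one coordinate realizes at most one of the five diagonals of a pentagon and each exponent-$\ge2$ coordinate at most a ``cherry'' of two. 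Hence an induced $C_5$ forces $s+2t\ge5$, and the symmetric computation for an antihole shows that covering the $m$ edges of $C_m$ (now the non-edges to be realized) needs $s+2t\ge m\ge5$, so no odd antihole of any length survives once $s+2t\le 4$.

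For the ``only if'' direction I make $s+2t\ge5$ sharp by exhibiting an explicit induced $C_5$ for each minimal non-perfect profile $(s,t)\in\{(5,0),(3,1),(1,2),(0,3)\}$. Passing to the complementary (annihilator) exponents $u_i=\alpha_i-v_i$ turns adjacency into $u_i+w_i\le\alpha_i$, and I assign to the five vertices, coordinate by coordinate, the conflict gadgets dictated by the covering count: an exponent-one coordinate realizes one pentagon-diagonal (both entries $1$), and an exponent-$\ge2$ coordinate realizes a cherry of two diagonals centred at a vertex (one entry $2$, its two pentagram-neighbours $1$, its two pentagon-neighbours $0$), the five diagonals being covered by cherries and singles as permitted by $(s,t)$. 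Each explicit vector is then checked to produce the pentagon and no chord. To reach an arbitrary bad $n$ I use two induced-subgraph embeddings: adjoining a new prime of any exponent $e$ via $v\mapsto(v,e)$ (the new coordinate is automatically satisfied, so adjacencies are preserved), and raising an exponent from $2$ to any $e\ge2$ by replacing the values $0,1,2$ with $0,\lceil e/2\rceil,e$, which reproduces the exponent-two compatibility pattern; any $n$ with $s+2t\ge5$ dominates one of the four minimal profiles and hence contains its $C_5$ as an induced subgraph.

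For the ``if'' direction the antihole case is already closed and $C_5$ is excluded by the covering count, so the remaining task is to exclude the longer odd holes $C_7,C_9,\dots$ when $s+2t\le 4$. The hard part will be exactly this, for the infinite families $p^\alpha$, $p_1^{\alpha_1}p_2^{\alpha_2}$, $p_1^{\alpha_1}p_2p_3$, because $\overline{C_m}$ is dense (indeed $2K_2$-free for $m\ge5$) and a single exponent-$\ge2$ coordinate can cover many diagonals, so the naive count no longer forbids a cover. I will instead argue family by family, bounding the length of any induced cycle directly from the ideal lattice: grouping vertices by the number of prime factors shows that certain divisors form cliques (e.g. in $p_1p_2p_3p_4$ the four divisors $n/p_i$ are pairwise adjacent), while others form matchings or have degree $1$, and a degree/edge count around the cycle caps its length below $5$. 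For the two-prime case the key lemma will be single-coordinate: a cyclic sequence $x_1,\dots,x_m$ with all consecutive sums $\ge\alpha$ cannot have all its distance-two sums $<\alpha$, which rules out the cyclic monotonicity a long induced hole would demand. Making these cycle-length bounds uniform in the exponents $\alpha_i$, so that they survive as the vertex set grows, is the principal obstacle and is where I expect the real work to lie.
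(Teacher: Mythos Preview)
Your $s+2t$ reformulation and the threshold-graph covering argument are genuinely different from the paper's case-by-case treatment and, where complete, cleaner. The observation that for each coordinate $c$ the relation ``$v_c+w_c<\alpha_c$'' defines a threshold graph (hence $2K_2$- and $P_4$-free), so that its intersection with the $m$-cycle of non-edges of an induced $\overline{C_m}$ has at most two edges (at most one when $\alpha_c=1$), is correct and yields $m\le s+2t$; this single count excludes all odd antiholes and $C_5=\overline{C_5}$ at once, replacing what in the paper are four separate complement lemmas (2.5, 2.7, and the complement halves of 2.2 and 2.8). Your ``only if'' constructions and the two embedding lemmas are also sound (in fact $(0,3)$ is already dominated by $(1,2)$ via $0\mapsto 0,\ 1\mapsto e$, so that profile is redundant). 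As a bonus you did not note: for $n=p_1p_2p_3p_4$ the same count finishes \emph{all} odd holes, since an exponent-one coordinate covers at most $\binom{(m-1)/2}{2}$ diagonals and $4\binom{(m-1)/2}{2}=\tfrac{(m-1)(m-3)}{2}<\tfrac{m(m-3)}{2}$.

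There is, however, a real gap exactly where you place it: excluding $C_m$ for odd $m\ge7$ when $n=p^\alpha q^\beta$ or $n=p^\alpha qr$. Your proposed single-coordinate lemma --- a cyclic sequence with all consecutive sums $\ge\alpha$ cannot have all distance-two sums $<\alpha$ --- is true (sum both systems) but does not apply here: a diagonal $\{i,i+2\}$ need only fail in \emph{one} coordinate, so the distance-two failures can be distributed arbitrarily between the $p$- and $q$-coordinates, and neither coordinate alone ever satisfies the hypothesis. Nor does the covering count help once $\alpha_c\ge2$, since a threshold subgraph of the dense graph $\overline{C_m}$ can have $\Theta(m^2)$ edges and the bound becomes vacuous. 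The paper's actual arguments (Lemmas 2.4 and 2.6) are of a different flavour: through a chain of claims they force every vertex on a putative odd hole to satisfy \emph{exactly one} of $\alpha_i>\alpha/2$, $\beta_i>\beta/2$ (respectively, tie the $\{0,1\}$-valued coordinates to the size of $\alpha_i$ in the $p^\alpha qr$ case), and then propagate this dichotomy around the cycle to produce an alternating pattern incompatible with odd length. Your framework does not generate this dichotomy, and something of roughly the paper's shape will be needed to close the argument.
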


In the next section, we prove Theorem \ref{main-theorem}. Before that we state an observation and an important result which will be crucial in our proof.

\begin{proposition}\label{adj}
	The vertex set of $\mathbb{AG}(\mathbb{Z}_n)$ is $\{ \langle m \rangle : m\mid n, 1<m<n \}$ and two vertices $\langle m_1\rangle$ and $\langle m_2\rangle$ are adjacent if and only if $n \mid m_1m_2$. 
\end{proposition}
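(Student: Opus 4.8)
The plan is to exploit the standard correspondence between the ideals of $\mathbb{Z}_n$ and the divisors of $n$. Since $\mathbb{Z}$ is a principal ideal domain and $\mathbb{Z}_n = \mathbb{Z}/n\mathbb{Z}$, every ideal of $\mathbb{Z}_n$ is principal, and the ideals of $\mathbb{Z}_n$ are precisely the sets $\langle m \rangle$ as $m$ ranges over the positive divisors of $n$ with $1 \le m \le n$. For an arbitrary $a$ one has $\langle a \rangle = \langle \gcd(a,n) \rangle$, so it is harmless to index ideals by divisors. First I would record that distinct divisors give distinct ideals: if $m \mid n$, then $|\langle m \rangle| = n/m$, so the map $m \mapsto \langle m \rangle$ is injective on the divisors of $n$ and the vertex labels are unambiguous.

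Next I would pin down the vertex set by imposing the two defining conditions, namely that the ideal is nonzero and has nonzero annihilator. The ideal $\langle m \rangle$ is the zero ideal exactly when $n \mid m$, i.e. (for a divisor $m \le n$) when $m = n$; this removes $\langle n \rangle = \langle 0 \rangle$. For the annihilator, I would compute $\mathrm{Ann}(\langle m \rangle) = \{ x \in \mathbb{Z}_n : xm \equiv 0 \pmod n \} = \langle n/m \rangle$ for $m \mid n$, and observe that this annihilator is the zero ideal precisely when $n/m = n$, i.e. when $m = 1$; this removes $\langle 1 \rangle = \mathbb{Z}_n$, consistent with the fact that a ring with unity has zero annihilator. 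Hence the surviving ideals are exactly $\langle m \rangle$ with $m \mid n$ and $1 < m < n$, giving the claimed vertex set.

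Finally, for the adjacency criterion I would use that the product of principal ideals satisfies $\langle m_1 \rangle \langle m_2 \rangle = \langle m_1 m_2 \rangle$. By definition two distinct vertices $\langle m_1 \rangle$ and $\langle m_2 \rangle$ are adjacent iff this product is the zero ideal, and $\langle m_1 m_2 \rangle = \langle 0 \rangle$ in $\mathbb{Z}_n$ holds iff $n \mid m_1 m_2$, which is exactly the stated condition.

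The argument is essentially routine ring theory, so there is no deep obstacle; the only points requiring a little care are the identification of $\mathrm{Ann}(\langle m \rangle)$ with $\langle n/m \rangle$ and the choice to index ideals by divisors of $n$ (using $\langle a \rangle = \langle \gcd(a,n)\rangle$) so that the correspondence between vertices and divisors $m$ with $1<m<n$ is a genuine bijection.
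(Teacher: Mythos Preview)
Your argument is correct and complete; the paper itself does not prove this proposition but merely states it as an observation, so there is no approach to compare against. Your write-up supplies precisely the routine verification (ideals of $\mathbb{Z}_n$ indexed by divisors, $\mathrm{Ann}(\langle m\rangle)=\langle n/m\rangle$, and $\langle m_1\rangle\langle m_2\rangle=0$ iff $n\mid m_1m_2$) that the authors presumably left to the reader.
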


\begin{theorem}{(Strong Perfect Graph Theorem)}
	A graph $G$ is perfect if and only if neither $G$ nor $G^c$ has an induced odd-cycle of length greater or equal to $5$.
\end{theorem}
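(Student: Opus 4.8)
The plan is to split the biconditional into its two directions, since they are of utterly different character. The forward (``only if'') direction is routine: I would note that an odd cycle $C_{2k+1}$ with $2k+1\geq 5$ has $\omega=2$ but $\chi=3$, hence is imperfect, and that its complement is imperfect as well, since $\omega(C_{2k+1}^c)=\alpha(C_{2k+1})=k$ while a colour class of $C_{2k+1}^c$ is a clique of $C_{2k+1}$ (of size at most $2$), forcing $\chi(C_{2k+1}^c)=\lceil (2k+1)/2\rceil=k+1>k$. Because perfection is hereditary by definition (it quantifies over all induced subgraphs), a perfect $G$ can contain neither an induced odd hole nor an induced odd antihole, and the contrapositive gives this direction at once. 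Nothing beyond the definitions is required here.

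The entire weight of the statement lies in the (``if'') converse, which is precisely the Strong Perfect Graph Theorem in its substantive form (conjectured by Berge, proved by Chudnovsky, Robertson, Seymour and Thomas): a graph with no induced odd hole and no induced odd antihole---a \emph{Berge graph}---is perfect. I would follow the CRST strategy, which proceeds through a structural decomposition. Its core is to establish that every Berge graph is either \emph{basic}---lying in one of a short list of classes each already known to be perfect, namely bipartite graphs, line graphs of bipartite graphs, the complements of these two, and double split graphs---or else admits one of a small family of well-behaved decompositions: a $2$-join, the complement of a $2$-join, or a balanced skew partition.

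Granting that decomposition theorem, I would conclude by a minimal-counterexample argument. Suppose $G$ is a minimal imperfect Berge graph. Each basic class is perfect by classical results (K\"onig's theorem handles the bipartite cases and their complements, with a separate explicit argument for double split graphs), so $G$ is not basic. One then shows, as isolated lemmas, that no minimal imperfect graph can admit a $2$-join (by a direct colour-lifting argument across the join) or a balanced skew partition (resting on Lov\'asz's replication/substitution machinery, the same tool underlying the Weak Perfect Graph Theorem). These facts contradict the decomposition theorem applied to $G$, so no minimal imperfect Berge graph exists, and every Berge graph is perfect.

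The overwhelming obstacle is the structural decomposition theorem itself. Proving that every Berge graph avoiding certain unavoidable configurations (various prisms and line-graph-like attachments) must split along one of the listed decompositions is an enormous case analysis running to well over a hundred pages in the original work, and there is no known short route to it; it is by a wide margin the hardest ingredient. By comparison, the perfection of the basic classes and the ``no decomposition survives in a minimal imperfect graph'' lemmas are comparatively standard once the decomposition theorem is in hand. For the purposes of the present paper this theorem is invoked as a cited black box, and only the easy forward direction and the named decomposition-free structure of Berge graphs are needed downstream.
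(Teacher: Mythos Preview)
The paper does not prove this theorem at all: it is stated without proof as a classical result (the Strong Perfect Graph Theorem of Chudnovsky, Robertson, Seymour and Thomas) and is then invoked as a tool throughout Section~2. So there is no ``paper's own proof'' to compare against; the theorem functions purely as a cited black box, exactly as you yourself note in your final sentence.

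Your outline of the CRST argument is accurate at the level of a sketch. The forward direction is correct as written, and for the converse you correctly identify the architecture of the proof: the list of basic classes, the three decompositions ($2$-join, complement of a $2$-join, balanced skew partition), and the minimal-counterexample endgame. If anything, you slightly understate the difficulty of the balanced-skew-partition step --- showing that a minimum imperfect Berge graph admits no balanced skew partition is itself a substantial piece of the CRST paper and is not a routine consequence of Lov\'asz replication alone --- but for a high-level plan this is acceptable. Since the paper neither proves nor sketches the theorem, your proposal already goes well beyond what the paper does, and there is nothing further to reconcile.
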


\section{Proof of Theorem \ref{main-theorem}}

We split the proof of Theorem \ref{main-theorem} into different cases (lemmas) depending upon the number of distinct prime factors of $n$.

First, we deal with the case when $n$ has more than $4$ distinct prime factors and show that in this case $\mathbb{AG}(\mathbb{Z}_n)$ is not perfect.

\begin{lemma}
If $n={p_1}^{\alpha_1}{p_2}^{\alpha_2}\cdots {p_k}^{\alpha_k}$ and $k\geq 5$, then $\mathbb{AG}(\mathbb{Z}_n)$ is not perfect.
\end{lemma}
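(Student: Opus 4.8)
The plan is to invoke the Strong Perfect Graph Theorem in its easy direction: to prove non-perfectness it suffices to exhibit a single induced odd cycle of length $5$ (an induced $C_5$) \emph{inside} $\mathbb{AG}(\mathbb{Z}_n)$, so I never need to analyze the complement. Thus the whole lemma reduces to constructing five divisors of $n$ that form an induced pentagon.

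First I would restrict to a convenient family of vertices in order to simplify the adjacency condition of Proposition \ref{adj}. Writing $n=p_1^{\alpha_1}\cdots p_k^{\alpha_k}$, I consider only divisors built from \emph{full} prime powers, that is, of the form $m_S=\prod_{j\in S}p_j^{\alpha_j}$ for $S\subseteq\{1,\dots,k\}$. For two such divisors the condition $n\mid m_Sm_T$ becomes purely combinatorial: a prime $p_\ell$ is covered in the product exactly when $\ell\in S\cup T$, so $\langle m_S\rangle$ and $\langle m_T\rangle$ are adjacent if and only if $S\cup T=\{1,\dots,k\}$. Hence the task becomes finding five proper nonempty subsets of $\{1,\dots,k\}$ whose ``covering graph'' (edges given by unions equal to the whole index set) is precisely a $5$-cycle.

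The key step is the explicit construction, which is exactly where the hypothesis $k\ge 5$ is used. I would force the primes $p_6,\dots,p_k$ to appear in all five divisors, so they are always covered and irrelevant to adjacency; only the traces $S_1,\dots,S_5$ of the chosen subsets on $\{1,\dots,5\}$ matter, and I need their covering graph within $\{1,\dots,5\}$ to be a $C_5$. To build such traces I use the diagonals of a pentagon: label the five diagonals of the cycle $1$-$2$-$3$-$4$-$5$ by the primes $p_1,\dots,p_5$ and let the complement $\{1,\dots,5\}\setminus S_i$ consist of the two primes labelling the diagonals incident to vertex $i$; then two complements meet exactly when the corresponding indices are non-adjacent on the pentagon, which is the pattern of missing chords of a $C_5$. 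Concretely this yields, for $i=1,\dots,5$,
$$ m_i=p_6^{\alpha_6}\cdots p_k^{\alpha_k}\prod_{j\in S_i}p_j^{\alpha_j},\quad S_1=\{2,3,5\},\ S_2=\{1,3,4\},\ S_3=\{2,4,5\},\ S_4=\{1,3,5\},\ S_5=\{1,2,4\}, $$
where each $m_i$ omits exactly two of the first five primes, so $1<m_i<n$ and the five divisors are distinct.

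Finally I would verify directly, by computing the ten pairwise unions, that $S_i\cup S_{i+1}=\{1,\dots,5\}$ for the five consecutive pairs (giving the edges) while $S_i\cup S_j\subsetneq\{1,\dots,5\}$ for the five non-consecutive pairs (giving the missing chords), so that $\langle m_1\rangle,\dots,\langle m_5\rangle$ induce a $C_5$ and the conclusion follows. I expect the only genuine obstacle to be the combinatorial heart of the argument, namely producing subsets whose covering graph is \emph{exactly} a $5$-cycle rather than a longer or shorter cycle or a cycle with extra chords; once the pentagon-diagonal labelling is in place, the remaining adjacency checks are routine bookkeeping.
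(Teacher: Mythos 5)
Your proposal is correct and follows essentially the same route as the paper: both exhibit an explicit induced $5$-cycle on divisors of the form $p_6^{\alpha_6}\cdots p_k^{\alpha_k}\prod_{j\in S}p_j^{\alpha_j}$ with $S\subseteq\{1,\dots,5\}$, where adjacency reduces to $S\cup T=\{1,\dots,5\}$ (equivalently, disjointness of the two-element complements, a pentagon in the Kneser graph $K(5,2)$). Your specific subsets differ from the paper's only by a relabelling, and the adjacency/non-adjacency checks all verify.
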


\begin{proof}
Let $m=n/({p_1}^{\alpha_1}{p_2}^{\alpha_2}\cdots {p_5}^{\alpha_5})$. Then the following five vertices, taken in order, $$\langle {p_1}^{\alpha_1}{p_2}^{\alpha_2}{p_4}^{\alpha_4}m \rangle, \langle {p_3}^{\alpha_3}{p_4}^{\alpha_4}{p_5}^{\alpha_5}m \rangle, \langle {p_1}^{\alpha_1}{p_2}^{\alpha_2}{p_3}^{\alpha_3}m \rangle, \langle {p_2}^{\alpha_2}{p_4}^{\alpha_4}{p_5}^{\alpha_5}m \rangle, \langle {p_1}^{\alpha_1}{p_3}^{\alpha_3}{p_5}^{\alpha_5}m \rangle$$ form an induced $5$-cycle in $\mathbb{AG}(\mathbb{Z}_n)$. The adjacency and non-adjacency follows from Proposition \ref{adj}. Hence, by strong perfect graph theorem, the lemma follows.
\end{proof}

Next we focus on the case when $n$ has exactly $4$ distinct prime factors. We characterize the condition when $\mathbb{AG}(\mathbb{Z}_n)$ is perfect.

\begin{lemma}
If $n={p_1}^{\alpha_1}{p_2}^{\alpha_2}{p_3}^{\alpha_3}{p_4}^{\alpha_4}$, then $\mathbb{AG}(\mathbb{Z}_n)$ is perfect if and only if $\alpha_{i}=1$ for all $i$.	
\end{lemma}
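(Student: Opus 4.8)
The plan is to prove both directions using the Strong Perfect Graph Theorem (SPGT), so the whole task reduces to detecting induced odd holes (odd cycles of length $\geq 5$) in $\mathbb{AG}(\mathbb{Z}_n)$ and in its complement $\mathbb{AG}(\mathbb{Z}_n)^c$. Throughout I would work purely combinatorially via Proposition \ref{adj}: a vertex $\langle m\rangle$ is identified with the divisor $m$ of $n$ with $1<m<n$, and $\langle m_1\rangle \sim \langle m_2\rangle$ precisely when $n\mid m_1m_2$. Since $n={p_1}^{\alpha_1}\cdots {p_4}^{\alpha_4}$, I would encode each divisor $m$ by its exponent vector $(e_1,e_2,e_3,e_4)$ with $0\le e_i\le \alpha_i$, so that $n\mid m_1m_2$ translates to the coordinatewise condition $e_i^{(1)}+e_i^{(2)}\ge \alpha_i$ for every $i$. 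This reduces all adjacency checks to comparing exponents, which is what makes the case split tractable.

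For the \emph{only if} direction I must show that if some $\alpha_j\ge 2$ then $\mathbb{AG}(\mathbb{Z}_n)$ is imperfect, i.e. I must exhibit an induced odd hole in either the graph or its complement. Without loss of generality assume $\alpha_1\ge 2$. The extra room in the first coordinate (values $0,1,\dots,\alpha_1$ rather than just $0,1$) is exactly the flexibility one exploits to build a forbidden cycle that the all-exponents-equal-to-one case does not admit. Concretely I would search for five divisors whose exponent vectors, read cyclically, satisfy $n\mid m_im_{i+1}$ for consecutive pairs and $n\nmid m_im_j$ for non-consecutive pairs (or the complementary pattern), using the first coordinate to tune adjacencies independently of the other three. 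I expect to be able to write down an explicit $5$-tuple analogous to the one used in the $k\ge 5$ lemma, now leveraging an intermediate exponent like $e_1=1$ together with the two extreme values $0$ and $\alpha_1$.

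For the \emph{if} direction, where every $\alpha_i=1$ so $n=p_1p_2p_3p_4$ is squarefree, I must show $\mathbb{AG}(\mathbb{Z}_n)$ is perfect, i.e. that neither the graph nor its complement contains an induced odd hole. Here each divisor corresponds to a nonempty proper subset $S\subsetneq \{1,2,3,4\}$ (the set of primes dividing $m$), and $\langle m_1\rangle \sim \langle m_2\rangle$ iff $S_1\cup S_2=\{1,2,3,4\}$, i.e. iff the complements satisfy $S_1^c\cap S_2^c=\emptyset$ (disjoint complements). This is a clean, finite, highly symmetric structure on the $14$ proper nonempty subsets, and the core of the proof is to rule out induced $C_5$, $C_7$, etc. in both $G$ and $G^c$. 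The main obstacle — and where I would concentrate the real work — is the complement side: I would argue that any induced odd cycle would force a system of union/intersection constraints on the associated subsets of a $4$-element set that is simply impossible by a counting or pigeonhole argument (there are only four primes to distribute, bounding how long an alternating chain of "covering" and "non-covering" pairs can be), and that the longest such hole one can realize has even length. Verifying the absence of $C_7$ and longer holes cleanly, rather than by brute enumeration, is the delicate step; I would aim to reduce it to the observation that complements being disjoint is equivalent to a containment pattern that cannot sustain a long induced odd alternation within a universe of size $4$.
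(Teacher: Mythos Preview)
Your overall architecture is exactly the paper's: both directions via the Strong Perfect Graph Theorem, with the forward direction handled by exhibiting an explicit induced $C_5$ once some $\alpha_i\ge 2$, and the backward direction by excluding odd holes in $G$ and $G^c$ for $n=p_1p_2p_3p_4$. For the forward direction the paper does precisely what you anticipate, using intermediate first-coordinate exponents $1$ and $\alpha_1-1$; the concrete $5$-cycle is
\[
\langle p_1^{\alpha_1}p_4^{\alpha_4}\rangle,\quad \langle p_2^{\alpha_2}p_3^{\alpha_3}p_4^{\alpha_4}\rangle,\quad \langle p_1^{\alpha_1}p_2^{\alpha_2}\rangle,\quad \langle p_1\,p_3^{\alpha_3}p_4^{\alpha_4}\rangle,\quad \langle p_1^{\alpha_1-1}p_2^{\alpha_2}p_3^{\alpha_3}\rangle,
\]
so your expectation there is fully vindicated.

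The divergence is in the backward direction. Your subset reformulation (vertices $\leftrightarrow$ proper nonempty $S\subseteq\{1,2,3,4\}$, adjacency $\Leftrightarrow S_1\cup S_2=\{1,2,3,4\}$) is correct and is implicitly what the paper uses, but the paper does \emph{not} produce the clean pigeonhole argument you are hoping for, and your sketch does not actually supply one either --- the phrase ``a containment pattern that cannot sustain a long induced odd alternation within a universe of size $4$'' is a hope, not a mechanism. What the paper uses instead is a degree stratification: the $14$ vertices have degrees $1$, $3$, $7$ according as $|S|=1,2,3$. In $G$, any induced $C_{\ge 5}$ must pass through some degree-$3$ vertex, say $\langle p_1p_2\rangle$, and a three-case check on which two of its three neighbours lie on the cycle yields a chord each time. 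In $G^c$, a vertex of $G$-degree $1$ has only one non-neighbour in $G^c$, so it cannot lie on any chordless $C_{\ge 5}$; a vertex of $G$-degree $3$ has exactly three non-neighbours in $G^c$, forcing any chordless odd cycle through it to have length exactly $5$, after which one short explicit check finishes. This degree bound is what disposes of $C_7$ and longer holes in one stroke and is the concrete idea your plan is missing; absent a genuine counting argument, you will end up doing this same short case analysis.
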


\begin{proof}
Let at least one $\alpha_{i}>1$, say $\alpha_1>1$.	Then the following five vertices, taken in order, $$\langle {p_1}^{\alpha_1}{p_4}^{\alpha_4} \rangle, \langle {p_2}^{\alpha_2}{p_3}^{\alpha_3}{p_4}^{\alpha_4} \rangle, \langle {p_1}^{\alpha_1}{p_2}^{\alpha_2} \rangle, \langle p_1{p_3}^{\alpha_3}{p_4}^{\alpha_4}\rangle, \langle {p_1}^{\alpha_1-1}{p_2}^{\alpha_2}{p_3}^{\alpha_3} \rangle$$ form an induced $5$-cycle in $\mathbb{AG}(\mathbb{Z}_n)$. As earlier, the adjacency and non-adjacency follows from Proposition \ref{adj}. Hence, by strong perfect graph theorem, $\mathbb{AG}(\mathbb{Z}_n)$ is not perfect.

Now, we assume that $n=p_1p_2p_3p_4$. Then $\mathbb{AG}(\mathbb{Z}_n)$ has $14$ vertices:
$$\begin{array}{llc}
\mbox{1st type:} &\langle p_1 \rangle, \langle p_2 \rangle, \langle p_3 \rangle, \langle p_4 \rangle & 4 \mbox{ vertices of degree }1\\
\mbox{2nd type:} &\langle p_1p_2 \rangle, \langle p_2p_3 \rangle, \ldots, \langle p_3p_4 \rangle & 6 \mbox{ vertices of degree }3\\
\mbox{3rd type:} &\langle p_1p_2p_3 \rangle, \langle p_2p_3p_4 \rangle, \langle p_1p_3p_4 \rangle, \langle p_1p_2p_4 \rangle & 4 \mbox{ vertices of degree }7\\
\end{array}$$
If possible, let $\mathbb{AG}(\mathbb{Z}_n)$ has an induced odd cycle $C$ of length $t\geq 5$. Thus $C$ must have a vertex of second type. Without loss of generality, let $\langle p_1p_2 \rangle$ be a vertex in $C$. As $\langle p_1p_2 \rangle$ is adjacent to three vertices, namely $\langle p_3p_4 \rangle, \langle p_1p_3p_4 \rangle, \langle p_2p_3p_4 \rangle$, at least two of them, must lie on $C$.

{\bf Case 1:} $\langle p_1p_3p_4 \rangle \sim \langle p_1p_2 \rangle \sim \langle p_3p_4 \rangle$ be a part of $C$. Let $\langle x \rangle$ be the next vertex on $C$, i.e., $\langle p_1p_3p_4 \rangle \sim \langle p_1p_2 \rangle \sim \langle p_3p_4 \rangle \sim \langle x \rangle$. Then by the adjacency condition of the last two vertices, we get $p_1p_2\mid x$. But this imply that $\langle x \rangle \sim \langle p_1p_3p_4 \rangle$, i.e., we get a chord in $C$, a contradiction.

{\bf Case 2:} $\langle p_2p_3p_4 \rangle \sim \langle p_1p_2 \rangle \sim \langle p_3p_4 \rangle$ be a part of $C$. In this case also, proceeding similalrly, we get a contradiction.

{\bf Case 3:} $\langle p_1p_3p_4 \rangle \sim \langle p_1p_2 \rangle \sim \langle p_2p_3p_4 \rangle$ be a part of $C$. However, in this case, we get a chord of the form $\langle p_1p_3p_4 \rangle \sim \langle p_2p_3p_4 \rangle$ in $C$, a contradiction.

Thus $\mathbb{AG}(\mathbb{Z}_n)$ has no induced odd cycle $C$ of length $t\geq 5$.

Now, we consider the complement graph of $\mathbb{AG}(\mathbb{Z}_n)$. If possible, let $C': \langle x_1\rangle \sim \langle x_2 \rangle \sim \cdots \sim \langle x_t \rangle \sim \langle x_1\rangle $ be an induced odd cycle $C$ of length $t\geq 5$ in $\mathbb{AG}^c(\mathbb{Z}_n)$. As $C'$ consists of $t\geq 5$ vertices, at least one of the vertices must be of 1st or 2nd type.

{\bf Case 1:} $\langle x_1 \rangle$ is a vertex of 1st type, i.e., without loss of generality, let $x_1=p_1$. Now, as $\langle p_1 \rangle$ is a pendant vertex in $\mathbb{AG}(\mathbb{Z}_n)$, $\langle p_1 \rangle$ is not adjacent to exactly one vertex in $\mathbb{AG}^c(\mathbb{Z}_n)$. Thus $C'$ always contain a chord, a contradiction.

{\bf Case 2:} $\langle x_1 \rangle$ is a vertex of 2nd type, i.e., without loss of generality, let $x_1=p_1p_2$. As degree of $\langle p_1p_2 \rangle$ in $\mathbb{AG}(\mathbb{Z}_n)$ is $3$, the number of vertices which are not adjacent to $\langle p_1p_2 \rangle$ in $\mathbb{AG}^c(\mathbb{Z}_n)$ is $3$. Thus, as $C'$ is chordless, it must be an induced $5$-cycle, i.e., $$C':\langle p_1p_2 \rangle \sim \langle x_2 \rangle \sim \langle x_3\rangle \sim \langle x_4 \rangle \sim \langle x_5 \rangle \sim \langle p_1p_2 \rangle$$
As $\langle x_3\rangle, \langle x_4 \rangle$ are adjacent to $\langle p_1p_2 \rangle$ in $\mathbb{AG}(\mathbb{Z}_n)$, we must have $x_3,x_4 \in \{p_3p_4,p_1p_3p_4,p_2p_3p_4 \}$. If $\{x_3,x_4 \}=\{p_1p_3p_4,p_2p_3p_4 \}$, then $\langle x_3\rangle \sim \langle x_4 \rangle$ in $\mathbb{AG}(\mathbb{Z}_n)$. Thus, without loss of generality, we can assume $x_3=p_3p_4$ and $x_4=p_1p_3p_4$, i.e.,  
$$C':\langle p_1p_2 \rangle \sim \langle x_2 \rangle \sim \langle p_3p_4\rangle \sim \langle p_1p_3p_4 \rangle \sim \langle x_5 \rangle \sim \langle p_1p_2 \rangle$$
As $\langle x_5 \rangle \not\sim \langle p_3p_4\rangle$ in $\mathbb{AG}^c(\mathbb{Z}_n)$, we have $p_1p_2\mid x_5$. Thus $x_5=p_1p_2p_3$ or $p_1p_2p_4$. However, in any case, $\langle x_5 \rangle \sim \langle p_1p_3p_4 \rangle$ in $\mathbb{AG}(\mathbb{Z}_n)$, a contradiction.

Thus $\mathbb{AG}^c(\mathbb{Z}_n)$ has no induced odd cycle $C$ of length $t\geq 5$. Hence, by strong perfect graph theorem, the lemma follows.
\end{proof}

Now, we turn towards the case when $n$ has exactly three distinct prime factors and characterize the perfect graphs among this subfamily.

\begin{lemma}
	If $n={p_1}^{\alpha_1}{p_2}^{\alpha_2}{p_3}^{\alpha_3}$ and $\alpha_{i}>1$ for at least two $i$'s, then $\mathbb{AG}(\mathbb{Z}_n)$ is not perfect.	
\end{lemma}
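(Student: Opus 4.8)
The plan is to exhibit an explicit induced $5$-cycle in $\mathbb{AG}(\mathbb{Z}_n)$ whenever $n={p_1}^{\alpha_1}{p_2}^{\alpha_2}{p_3}^{\alpha_3}$ has at least two exponents exceeding $1$, and then invoke the Strong Perfect Graph Theorem. By the previous lemma's strategy and Proposition~\ref{adj}, the entire task reduces to producing five divisors $\langle x_1 \rangle, \ldots, \langle x_5 \rangle$ of $n$ (each strictly between $1$ and $n$) such that $n \mid x_i x_{i+1}$ for consecutive indices (mod $5$) and $n \nmid x_i x_j$ for the two non-consecutive pairs. Without loss of generality I would assume $\alpha_1 > 1$ and $\alpha_2 > 1$; the third exponent $\alpha_3 \geq 1$ may or may not be strict, and I would want my construction to work uniformly in that slack.

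The key idea is that with two primes carrying ``extra'' exponent room, I can split the prime-power factors across consecutive vertices so that each adjacent pair reassembles all of $n$ while each non-adjacent pair falls short in at least one prime. Concretely, I would try vertices built by distributing ${p_1}^{\alpha_1}$, ${p_2}^{\alpha_2}$, ${p_3}^{\alpha_3}$ and their proper partial powers (such as ${p_1}^{\alpha_1-1}$, ${p_2}^{\alpha_2 - 1}$) around the cycle. A natural candidate, mirroring the $4$-prime lemma, is something like
$$\langle {p_1}^{\alpha_1}{p_3}^{\alpha_3} \rangle,\ \langle {p_2}^{\alpha_2} \rangle,\ \langle {p_1}^{\alpha_1}{p_2}^{\alpha_2}{p_3}^{\alpha_3 -1}\rangle\ \text{or similar},$$
but the precise exponents must be tuned so that (i) every vertex is a \emph{proper} divisor, forcing me to never use the full $n$ itself and never use $1$; (ii) consecutive products are divisible by $n$; and (iii) the two ``diagonal'' products miss $n$ in exactly one prime-power slot. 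I expect that using the borrowed exponents ${p_1}^{\alpha_1 - 1}$ and ${p_2}^{\alpha_2 - 1}$ on opposite sides of the cycle is what breaks the unwanted chords.

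The main obstacle will be the verification that no chord appears, i.e.\ checking the two non-adjacency conditions simultaneously while keeping all five vertices proper divisors of $n$. This is delicate precisely because three primes give less room than four: a single prime with exponent $1$ (namely $p_3$) must be shared very carefully, since it cannot be split into a partial power, so the divisibility $n \mid x_i x_{i+1}$ must route the full ${p_3}^{\alpha_3}$ through at least one member of each consecutive pair while still denying it to the diagonal pairs. Once a valid $5$-tuple is fixed, the adjacency and non-adjacency are routine divisibility checks via Proposition~\ref{adj}, and the Strong Perfect Graph Theorem immediately yields that $\mathbb{AG}(\mathbb{Z}_n)$ is not perfect. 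I would therefore spend most of the effort searching for the cleanest such $5$-tuple and then simply tabulate the six consecutive products (all divisible by $n$) against the five diagonal products (each failing divisibility) to complete the argument.
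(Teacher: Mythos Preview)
Your plan is exactly the paper's approach: assume without loss of generality that $\alpha_1,\alpha_2\geq 2$, write down an explicit induced $5$-cycle using the partial powers $p_1^{\alpha_1-1}$ and $p_2^{\alpha_2-1}$, and invoke the Strong Perfect Graph Theorem. The only gap is that you never actually commit to a $5$-tuple; since the paper's entire proof \emph{is} the explicit list, your proposal as written stops just short of being a proof. (Minor slip: a $5$-cycle has five consecutive pairs, not six.)

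For the record, the paper uses
\[
\langle p_2^{\alpha_2}p_3^{\alpha_3}\rangle,\
\langle p_1^{\alpha_1}p_2\rangle,\
\langle p_1\,p_2^{\alpha_2-1}p_3^{\alpha_3}\rangle,\
\langle p_1^{\alpha_1-1}p_2^{\alpha_2}\rangle,\
\langle p_1^{\alpha_1}p_3^{\alpha_3}\rangle,
\]
which avoids $p_3^{\alpha_3-1}$ entirely (so works uniformly for $\alpha_3\geq 1$, as you wanted). The five adjacencies and five non-adjacencies are then one-line divisibility checks: each non-edge fails in exactly one prime, namely $p_1$ for $\{x_1,x_3\}$ and $\{x_1,x_4\}$, $p_2$ for $\{x_2,x_5\}$ and $\{x_3,x_5\}$, and $p_3$ for $\{x_2,x_4\}$. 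Once you write this down, your argument is complete and identical to the paper's.
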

\begin{proof}
Let $\alpha_{i}\geq 2$ for at least two $i$'s, say $\alpha_1,\alpha_2\geq 2$. Then the following five vertices, taken in order, $$\langle {p_2}^{\alpha_2}{p_3}^{\alpha_3} \rangle, \langle {p_1}^{\alpha_1}{p_2} \rangle, \langle {p_1}{p_2}^{\alpha_2-1}{p_3}^{\alpha_3} \rangle, \langle {p_1}^{\alpha_1-1}{p_2}^{\alpha_2}\rangle, \langle {p_1}^{\alpha_1}{p_3}^{\alpha_3} \rangle$$ form an induced $5$-cycle in $\mathbb{AG}(\mathbb{Z}_n)$. As earlier, the adjacency and non-adjacency follows from Proposition \ref{adj}. Hence, by strong perfect graph theorem, $\mathbb{AG}(\mathbb{Z}_n)$ is not perfect.
\end{proof}
So, now we assume that $n={p_1}^{\alpha_1}{p_2}{p_3}$.

\begin{lemma}\label{3-factor-no-odd-cycle}
	If $n={p}^{\alpha}{q}r$, then $\mathbb{AG}(\mathbb{Z}_n)$ has no induced odd cycle of length greater than $3$.
\end{lemma}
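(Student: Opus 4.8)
The plan is to run everything through the explicit vertex description of Proposition \ref{adj}. Writing $n=p^{\alpha}qr$, each vertex is $\langle m\rangle$ with $m=p^{a}q^{b}r^{c}$, where $0\le a\le\alpha$ and $b,c\in\{0,1\}$, excluding $m=1$ and $m=n$. By Proposition \ref{adj}, two vertices $\langle p^{a_1}q^{b_1}r^{c_1}\rangle$ and $\langle p^{a_2}q^{b_2}r^{c_2}\rangle$ are adjacent iff the three conditions $a_1+a_2\ge\alpha$, $b_1+b_2\ge 1$ and $c_1+c_2\ge 1$ hold simultaneously. I would sort the vertices into four types by the pair $(b,c)$: type $A$ ($qr\mid m$), type $B$ ($q\mid m,\ r\nmid m$), type $C$ ($r\mid m,\ q\nmid m$) and type $D$ ($b=c=0$). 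Two observations then drive the whole argument: (i) \emph{every} edge forces $a_1+a_2\ge\alpha$, so if I call a vertex \emph{low} when $2a<\alpha$ and \emph{high} otherwise, then the low vertices form an independent set; and (ii) a type-$A$ vertex is automatically $q$- and $r$-compatible with everything, so its adjacency is governed \emph{purely} by the threshold $a_1+a_2\ge\alpha$. At the level of types the only adjacent pairs are $AA,AB,AC,AD,BC$, whereas $BB,BD,CC,CD,DD$ are never edges; in particular a type-$D$ vertex can be adjacent only to type-$A$ vertices.

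Next I would assume, for contradiction, an induced odd cycle $C$ of length $t\ge 5$ and extract structure. Since the low vertices are independent, $C$ contains at most $\lfloor t/2\rfloor$ of them, hence at least $\lceil t/2\rceil\ge 3$ high vertices. First I would rule out a high type-$A$ vertex $u$: being high and type $A$, $u$ is threshold-adjacent and type-compatible with every other high vertex, hence adjacent in the graph to all of them; as $u$ has only two neighbours on $C$, this pins the number of high vertices to three and forces the arc of $C$ avoiding $u$ to have an all-low, hence pairwise non-adjacent, interior, collapsing $C$ to length $\le 4$. Then, since lows are independent, each low vertex has both $C$-neighbours high; a low type-$D$ vertex would thus need a high type-$A$ neighbour, which no longer exists, so no low type-$D$ vertex occurs.

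The crux is eliminating high type-$D$ vertices. A high type-$D$ vertex $w$ is adjacent only to type-$A$ vertices, so (no high-$A$ being available) its two $C$-neighbours are low type-$A$ vertices $\langle p^{a_-}qr\rangle$ and $\langle p^{a_+}qr\rangle$, whose other $C$-neighbours I call $w_-$ and $w_+$ (all five vertices distinct since $t\ge 5$). Because a low type-$A$ vertex of $p$-exponent $a$ is adjacent to exactly those vertices whose $p$-exponent is $\ge\alpha-a$, the fact that each of these low-$A$ vertices has precisely its two $C$-neighbours as graph-neighbours yields exponent inequalities: from the vertex $\langle p^{a_-}qr\rangle$ I read off $a_{w_-}\ge\alpha-a_-$ while every other cycle vertex, in particular $w_-$ as seen from $\langle p^{a_+}qr\rangle$, satisfies $a_{w_-}<\alpha-a_+$, giving $a_+<a_-$; the symmetric reading at $w_+$ gives $a_-<a_+$, a contradiction. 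Hence $C$ contains no type-$D$ vertex at all, leaving only types $A,B,C$, with every high vertex of type $B$ or $C$.

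Finally I would finish by two-colouring $C$. With no $D$'s present, the only high--high edges are $B$--$C$, so along each maximal run of consecutive high vertices the types alternate between $B$ and $C$; moreover each low-$A$ vertex is flanked by a non-adjacent high pair, necessarily $BB$ or $CC$, each low-$B$ by two high-$C$'s, and each low-$C$ by two high-$B$'s. Assigning colour $0$ to every $B$-vertex and to every low-$A$ flanked by two $C$'s, and colour $1$ to every $C$-vertex and to every low-$A$ flanked by two $B$'s, I would check directly that every edge of $C$ joins the two colour classes, so $C$ is bipartite and therefore of even length, contradicting $t$ odd. The step I expect to be the main obstacle is the elimination of high type-$D$ vertices: its inequality bookkeeping on the $p$-exponents is the only genuinely delicate point, while the remainder is a disciplined use of the threshold and independence observations.
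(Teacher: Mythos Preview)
Your argument is correct. The type classification $A,B,C,D$ according to the pair $(b,c)$, the high/low split on the $p$-exponent, and the final bipartite $2$-colouring all go through as written; in particular your inequality bookkeeping for eliminating a high type-$D$ vertex (deriving $a_+<a_-$ and $a_-<a_+$ from the two flanking low-$A$ vertices) is clean and uses exactly that a type-$A$ vertex is adjacent to another vertex iff the $p$-threshold is met.

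Your route, however, is genuinely different from the paper's. The paper proceeds by four successive claims which, in your language, eliminate low-$D$, then high-$A$, then \emph{low-$A$} (this is their longest and most delicate step), and then --- once all of type $A$ is gone --- type $D$ disappears for free since a $D$-vertex can only be adjacent to $A$-vertices. A final claim forces every remaining vertex to be high, after which a short alternation argument on $\beta,\gamma$ gives the contradiction. By contrast you never eliminate low-$A$ vertices at all: you remove high-$A$ and all of $D$ directly, and then absorb the surviving low-$A$ vertices into a proper $2$-colouring of the cycle. What you gain is that you sidestep the paper's hardest claim (the elimination of low-$A$), replacing it with a transparent parity/bipartiteness argument; what the paper gains is a more uniform reduction in which the endgame involves only high $B$- and $C$-vertices and is essentially a one-line alternation. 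Your identification of the high-$D$ step as the crux is accurate for your approach, and it is precisely the step the paper avoids by first killing type $A$ entirely.
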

\begin{proof}
	If possible, let $\mathbb{AG}(\mathbb{Z}_n)$ has an induced odd cycle $C: \langle x_1\rangle \sim \langle x_2 \rangle \sim \cdots \sim \langle x_t \rangle \sim \langle x_1\rangle $, where $x_i=p^{\alpha_{i}}q^{\beta_i}r^{\gamma_i}$ for $i=1,2,\ldots,t$.
	
	{\it Claim 1:} For all $i\in \{1,2,\ldots,t\}$, either $\alpha_{i}> \alpha/2$ or one of $\beta_{i}, \gamma_i \neq 0$.
	
	{\it Proof of Claim 1:} If possible let $\alpha_i \leq \alpha/2$ and $\beta_i =\gamma_i=0$. Now  $\langle x_i\rangle \sim \langle x_{i+1}\rangle$ and $\langle x_i\rangle \sim \langle x_{i-1}\rangle$ imply $\alpha_{i+1}, \alpha_{i-1} \geq \alpha/2$ and $\beta_{i+1}=\beta_{i-1}=1=\gamma_{i+1}=\gamma_{i-1}$. Hence we have $\langle x_{i+1}\rangle \sim \langle x_{i-1}\rangle$, which is a contradiction.

	{\it Claim 2:} For all $i\in \{1,2,\ldots,t\}$, either $\alpha_{i}< \alpha/2$ or one of $\beta_{i}, \gamma_i \neq 1$.
	
	{\it Proof of Claim 2:} If possible let $\alpha_i \geq \alpha/2$ and $\beta_i =\gamma_i=1$. Now  $\langle x_i\rangle \nsim \langle x_{i+2}\rangle$ and $\langle x_i\rangle \nsim \langle x_{i+3}\rangle$ imply $\alpha_{i+2}, \alpha_{i+3} < \alpha/2$, hence $\alpha_{i+2}+\alpha_{i+3}< \alpha$, which is a contradiction as $\langle x_{i+2}\rangle \sim \langle x_{i+3}\rangle$. 
	
	{\it Claim 3:} For all $i\in \{1,2,\ldots,t\}$, either $\alpha_{i}> \alpha/2$ or one of $\beta_{i}, \gamma_i \neq 1$.
	
	{\it Proof of Claim 3:} Without loss of generality let $\alpha_1 \leq \alpha/2$ and $\beta_1 =\gamma_1=1$. Now $\langle x_1\rangle \sim \langle x_{2}\rangle$ and $\langle x_1\rangle \sim \langle x_{t}\rangle$ imply $\alpha_{2}, \alpha_{t} \geq \alpha/2$. As $\langle x_2\rangle \nsim \langle x_{t}\rangle$ then either $\beta_2 + \beta_t=0$ or $\gamma_2+ \gamma_t=0$ or both. Again without loss of generality we can take $\beta_2+ \beta_t=0$, i.e., $\beta_2= \beta_t=0$ and hence $\langle x_2\rangle \sim \langle x_{3}\rangle$ and $\langle x_t\rangle \sim \langle x_{t-1}\rangle$ imply $\beta_3=1=\beta_{t-1}$. Now $\langle x_1\rangle \sim \langle x_{t}\rangle$ and $\langle x_1\rangle \nsim \langle x_{4}\rangle$ imply $\alpha_1+\alpha_t \geq \alpha$ and $\alpha_1+ \alpha_4 < \alpha$. From these two equations we have $\alpha_t > \alpha_4$. Therefore $\alpha_3+ \alpha_4 \geq \alpha$ imply $\alpha_3+\alpha_t > \alpha$. So $\langle x_t\rangle \nsim \langle x_{3}\rangle$ and  $\beta_3=1$  imply $\gamma_t +\gamma_3=0$, i.e., $\gamma_3=\gamma_t=0$. Therefore $\gamma_{t-1}=1$. As  $\langle x_2\rangle \nsim \langle x_{t-1}\rangle$ and $\beta_{t-1}=\gamma_{t-1}=1$ hence $\alpha_2 + \alpha_{t-1}< \alpha$ and we know $\alpha_1 +\alpha_2 \geq \alpha$. From these two equations we have $\alpha_1 > \alpha_{t-1}$. So $\alpha_{t-1}+ \alpha_{t-2} \geq \alpha$ imply $\alpha_1 + \alpha_{t-2} > \alpha$ and $\beta_1=\gamma_1=1$, so we have $\langle x_1\rangle \sim \langle x_{t-2}\rangle$, which is a contradiction.
	
	From Claim 2 and Claim 3 we see that for any $i$, both $\alpha_i$ and $\beta_i$ can not be $1$ and hence both can not be $0$. 
	
	{\it Claim 4:} For all $i\in \{1,2,\ldots,t\}$, $\alpha_{i}> \alpha/2$ .
	
	{\it Proof of Claim 4:} Without loss of generality let $\alpha_1 \leq \alpha/2$ and $\beta_1 =1, \gamma_1=0$. Now $\langle x_1\rangle \sim \langle x_{2}\rangle$ and $\langle x_1\rangle \sim \langle x_{t}\rangle$ imply $\alpha_{2}, \alpha_{t} \geq \alpha/2$ and $\gamma_2= \gamma_t=1$. As $\langle x_2\rangle \nsim \langle x_{t}\rangle$, $\alpha_2+ \alpha_t \geq \alpha$ and $\gamma_2+\gamma_t=2$, we have $\beta_2+\beta_t=0$, i.e., $\beta_2=\beta_t=0$. Hence $\beta_3=\beta_{t-1}=1$. Now $\langle x_3\rangle \nsim \langle x_{t}\rangle$ and $\beta_3=1=\gamma_t$ imply $\alpha_3+\alpha_t< \alpha$ and $\alpha_2+\alpha_3 \geq \alpha$, hence $\alpha_2 > \alpha_t$. Therefore $\alpha_t+\alpha_{t-1} \geq \alpha$ implies $\alpha_2+\alpha_{t-1} > \alpha$. So $\beta_{t-1}=\gamma_2=1$ imply $\langle x_2\rangle \sim \langle x_{t-1}\rangle$,  which is impossible and hence the Claim holds. 
	
	So from the Claim 1 and Claim 4 we can consider $\alpha_1> \alpha/2$,  $\beta_1=1$ and $\gamma_1=0$. From Claim 4 we have $\alpha_3, \alpha_4 > \alpha/2$. So $\langle x_1\rangle \nsim \langle x_{3}\rangle$, $\alpha_1+\alpha_3 > \alpha$ and $\beta_1=1$ imply $\gamma_1+\gamma_3=0$, i.e., $\gamma_3=0$, i.e., $\gamma_4=1$. Therefore $\alpha_1 +\alpha_4 > \alpha$ and $\beta_1=\gamma_4=1$ imply $\langle x_1\rangle \sim \langle x_{4}\rangle$, which is a contradiction. This completes the proof.
\end{proof}

\begin{lemma}\label{3-factor-no-odd-cycle-complement}
	If $n={p}^{\alpha}{q}r$, then $\mathbb{AG}^c(\mathbb{Z}_n)$ has no induced odd cycle of length greater than $3$.
\end{lemma}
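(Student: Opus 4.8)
The plan is to compare $\mathbb{AG}^{c}(\mathbb{Z}_n)$, restricted to the vertices of a hypothetical induced odd cycle, with an auxiliary \emph{threshold} graph built from the exponent of $p$ alone, and to show that the two graphs can disagree on at most two edges — far too few for an odd cycle to survive. Write each vertex as $x_i=p^{\alpha_i}q^{\beta_i}r^{\gamma_i}$ with $\beta_i,\gamma_i\in\{0,1\}$. By Proposition~\ref{adj}, $\langle x_i\rangle$ and $\langle x_j\rangle$ are adjacent in $\mathbb{AG}^{c}(\mathbb{Z}_n)$ iff they are \emph{non}-adjacent in $\mathbb{AG}(\mathbb{Z}_n)$, i.e. iff $\alpha_i+\alpha_j<\alpha$, or $\beta_i=\beta_j=0$, or $\gamma_i=\gamma_j=0$. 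Suppose $C':\langle x_1\rangle\sim\cdots\sim\langle x_t\rangle\sim\langle x_1\rangle$ is an induced odd cycle with $t\geq 5$.

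\emph{Step 1: locating the few ``bad'' edges.} If two vertices with $\beta=0$ were non-consecutive on $C'$, they would form a non-edge, hence be adjacent in $\mathbb{AG}(\mathbb{Z}_n)$, forcing $\beta_i+\beta_j\geq 1$, which is impossible. So all vertices with $\beta=0$ are pairwise cycle-consecutive; since $C_t$ contains no triangle for $t\geq 4$, there are at most two of them, and if two they are adjacent. The same holds for $\gamma=0$. Consequently $C'$ has at most one edge joining two $\beta=0$ vertices (a ``$q$-edge'') and at most one joining two $\gamma=0$ vertices (an ``$r$-edge''), these two edges possibly coinciding.

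\emph{Step 2 and 3: the threshold comparison and the contradiction.} Let $G^{*}$ be the graph on $\{x_1,\dots,x_t\}$ with $x_i\sim x_j$ iff $\alpha_i+\alpha_j<\alpha$; taking weights $-\alpha_i$ and threshold $-\alpha$ shows $G^{*}$ is a threshold graph, hence contains no induced $P_4$, $C_4$, or $2K_2$. I claim $G^{*}=C'-D$ with $D\subseteq\{\text{$q$-edge},\text{$r$-edge}\}$, so $|D|\leq 2$. Indeed, a non-edge of $C'$ yields $\mathbb{AG}$-adjacency, hence $\alpha_i+\alpha_j\geq\alpha$, a non-edge of $G^{*}$; and an edge of $C'$ that is neither a $q$- nor an $r$-edge satisfies $\beta_i+\beta_j\geq 1$ and $\gamma_i+\gamma_j\geq 1$, so it can be a complement-edge only through $\alpha_i+\alpha_j<\alpha$, i.e. it lies in $G^{*}$. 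Now $C'\cong C_t$ and $G^{*}=C_t-D$ must be threshold. If $|D|=0$, then $G^{*}=C_t$ contains an induced $P_4$ (as $t\geq 5$); if $|D|=1$, then $G^{*}$ is a Hamiltonian path $P_t$, again containing $P_4$. If $|D|=2$, the two removed edges are either vertex-disjoint, leaving two paths on $\geq 2$ vertices each, i.e. an induced $2K_2$; or they share a vertex $w$ (necessarily of the form $p^{a}$), leaving $P_{t-1}$ plus an isolated $w$, where $t-1\geq 4$ forces an induced $P_4$. In every case $G^{*}$ fails to be threshold, a contradiction, so no such $C'$ exists.

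The heart of the argument — and the step I expect to be delicate — is Step~1's bound showing that $C'$ deviates from the pure $p$-threshold structure on at most two edges; everything else is a clean appeal to the fact that threshold graphs exclude $P_4$, $C_4$, and $2K_2$. The one place demanding care is the $|D|=2$ case, where one must split according to whether the $q$-edge and $r$-edge are vertex-disjoint (yielding $2K_2$) or meet in a common vertex (yielding $P_{t-1}$); overlooking the shared-vertex configuration would leave a genuine gap.
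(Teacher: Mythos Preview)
Your argument is correct and genuinely different from the paper's. The paper proceeds by a vertex-by-vertex case analysis: it establishes four claims (that no $x_i$ can have $\alpha_i\leq\alpha/2$ with $\beta_i=\gamma_i=0$; none can have $\alpha_i\geq\alpha/2$ with $\beta_i=\gamma_i=1$; etc.), each proved by chasing adjacencies and non-adjacencies around the cycle until a contradiction appears. Your proof instead isolates the structural reason the cycle cannot exist: the $p$-exponents alone define a threshold graph $G^{*}$, and the $q$- and $r$-parts can perturb the edge set of $C'$ away from $G^{*}$ on at most two edges, because any two $\beta=0$ vertices (respectively $\gamma=0$ vertices) that were non-consecutive would produce a chord via Proposition~\ref{adj}. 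Since threshold graphs exclude induced $P_4$, $C_4$, and $2K_2$, deleting at most two edges from $C_t$ ($t\geq 5$) cannot produce one.

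What your approach buys is conceptual clarity and brevity: once Step~1 is in hand, the endgame is a short case split on $|D|\in\{0,1,2\}$, and the same template visibly handles Lemma~\ref{3-factor-no-odd-cycle} (work in $\mathbb{AG}$ instead, where the threshold graph is the complement relation $\alpha_i+\alpha_j\geq\alpha$ and the roles of edges and non-edges swap). The paper's approach, by contrast, is entirely self-contained and does not invoke the forbidden-subgraph characterisation of threshold graphs, at the cost of considerably more bookkeeping. Your handling of the $|D|=2$ shared-vertex case is correct: the common vertex must indeed have $\beta=\gamma=0$, becomes isolated in $G^{*}$, and the remaining $P_{t-1}$ with $t-1\geq 4$ supplies the induced $P_4$.
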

\begin{proof}
	We start by noting that $\langle a \rangle \sim \langle b \rangle$ in $\mathbb{AG}^c(\mathbb{Z}_n)$ if and only if $n\nmid ab$. If possible, let $\mathbb{AG}^c(\mathbb{Z}_n)$ has an induced odd cycle $C: \langle x_1\rangle \sim \langle x_2 \rangle \sim \cdots \sim \langle x_t \rangle \sim \langle x_1\rangle $, where $x_i=p^{\alpha_{i}}q^{\beta_i}r^{\gamma_i}$ for $i=1,2,\ldots,t$.
	
	{\it Claim 1:}  For all $i\in \{1,2,\ldots,t\}$, either $\alpha_{i}> \alpha/2$ or one of $\beta_{i}, \gamma_i \neq 0$.
	
	{\it Proof of Claim 1:} If possible let $\alpha_i \leq \alpha/2$ and $\beta_i =\gamma_i=0$. Now $ \langle x_{i+2}\rangle$, $\langle x_{i+3}\rangle \nsim \langle x_i \rangle$ imply $\alpha_{i+2}, \alpha_{i+3} \geq\alpha/2$, $\beta_{i+2}=\beta_{i+3}=1=\gamma_{i+2}=\gamma_{i+3}$. Therefore from this we have $\langle x_{i+2}\rangle \nsim \langle x_{i+3}\rangle$, which is a contradiction.
	
	{\it Claim 2:} For all $i\in \{1,2,\ldots,t\}$, either $\alpha_{i}< \alpha/2$ or one of $\beta_{i}, \gamma_i \neq 1$.
	
	{\it Proof of Claim 2:} If possible let $\alpha_i \geq \alpha/2$ and $\beta_i =\gamma_i=1$.  As $ \langle x_{i-1}\rangle$ and $\langle x_{i+1}\rangle \sim \langle x_i \rangle$, hence $\alpha_{i-1}, \alpha_{i+1} <\alpha/2$, i.e., $\alpha_{i-1}+ \alpha_{i+1}<\alpha$ and hence $\langle x_{i-1}\rangle \sim \langle x_{i+1} \rangle$, which is a contradiction.
	
	{\it Claim 3:} For all $i\in \{1,2,\ldots,t\}$, either $\alpha_{i}> \alpha/2$ or one of $\beta_{i}, \gamma_i \neq 1$.
	
	{\it Proof of Claim 3:} Without loss of generality let $\alpha_1 \leq \alpha/2$ and $\beta_1 =\gamma_1=1$.  As $ \langle x_{3}\rangle$, $\langle x_{4}\rangle \nsim \langle x_1 \rangle$ hence $\alpha_3, \alpha_4 \geq \alpha/2$, i.e., $\alpha_3 + \alpha_4 \geq \alpha$. Now $\langle x_{3}\rangle \sim \langle x_4 \rangle$ implies either $\beta_3 +\beta_4 =0$ or $\gamma_3 + \gamma_4=0$ or both. Without loss of generality we can assume $\beta_3 + \beta_4=0$, i.e., $\beta_3=0=\beta_4$. Now $\langle x_{2}\rangle \nsim \langle x_4 \rangle$,  $\beta_4=0$ imply $\beta_2=1$ and $\langle x_{3}\rangle \nsim \langle x_t \rangle$,  $\beta_3=0$ imply $\beta_t=1$. Again $\langle x_{1}\rangle \sim \langle x_2 \rangle$ and $\beta_1=1=\gamma_1$  imply $\alpha_1 +\alpha_2 < \alpha$. Therefore $\alpha_2 + \alpha_t \geq \alpha$ implies $\alpha_t > \alpha_1$. So $\alpha_1 + \alpha_{t-1} \geq \alpha$ imply $\alpha_t + \alpha_{t-1} > \alpha$. Now $\langle x_{t}\rangle \sim \langle x_{t-1} \rangle$ and $\beta_t=1$ imply $\gamma_t + \gamma_{t-1}=0$, i.e., $\gamma_t=0$. Again $\langle x_{1}\rangle \sim \langle x_t \rangle$ and $\beta_1=1=\gamma_1$  imply $\alpha_1 +\alpha_t < \alpha$. So $\alpha_2 + \alpha_t \geq \alpha$ imply $\alpha_2 > \alpha_1$. So $\alpha_1 + \alpha_3 \geq \alpha$ imply $\alpha_2 + \alpha_3 > \alpha$. Now $\langle x_{2}\rangle \sim \langle x_{3} \rangle$ and $\beta_2=1$ imply $\gamma_2 + \gamma_{3}=0$, i.e., $\gamma_2=0$. Therefore $\gamma_2= 0= \gamma_t$ implies $\langle x_{2}\rangle \sim \langle x_{t} \rangle$, which is impossible.
	
	From Claim 2 and Claim 3 we see that for any $i$, both $\alpha_i$ and $\beta_i$ can not be $1$ and hence both can not be $0$. 
	
	{\it Claim 4:} For all $i\in \{1,2,\ldots,t\}$, $\alpha_{i}> \alpha/2$ .
	
	{\it Proof of Claim 4:} Without loss of generality let $\alpha_1 \leq \alpha/2$ and $\beta_1 =1, \gamma_1=0$. Therefore $ \langle x_{3}\rangle$, $\langle x_{4}\rangle \nsim \langle x_1 \rangle$ imply $\alpha_3, \alpha_4 \geq \alpha/2$ and $\gamma_3 =1 =\gamma_4$. So $\langle x_{3}\rangle \sim \langle x_4 \rangle$ imply $\beta_3 + \beta_4 =0$, i.e., $\beta_3=0 = \beta_4$. Now $\beta_4=0$ and $\langle x_{4}\rangle \nsim \langle x_2 \rangle$  imply $\beta_2=1$. Now $\langle x_{2}\rangle \sim \langle x_3 \rangle$ and $\beta_2=1=\gamma_3$ imply $\alpha_2 + \alpha_3 < \alpha$. So $\alpha_1+ \alpha_3 \geq \alpha$ imply $\alpha_1 > \alpha_2$. Also $\alpha_2 +\alpha_t \geq \alpha$ imply $\alpha_1 + \alpha_t > \alpha$. So $\langle x_{1}\rangle \nsim \langle x_t \rangle$ and $\beta_1=1$ imply $\gamma_1 + \gamma_t=0$, i.e., $\gamma_t=0$, i.e., $\gamma_2=1$.Hence we have $\beta_2=1=\gamma_2$, which is not possible by Claim 2 and Claim 3. 
	
	So from the Claim 1 and Claim 4 we can consider $\alpha_1> \alpha/2$,  $\beta_1=1$ and $\gamma_1=0$. So $\langle x_{3}\rangle , \langle x_4 \rangle  \nsim \langle x_1 \rangle$ imply $\gamma_3=1=\gamma_4$.  From Claim 4 we have $\alpha_3, \alpha_4 > \alpha/2$. So $\langle x_3 \rangle  \sim \langle x_4 \rangle$ implies $\beta_3 +\beta_4=0$, i.e., $\beta_3=0=\beta_4$. Now $\langle x_2 \rangle  \sim \langle x_3 \rangle$, $\gamma_{3}=1$ and $\alpha_2, \alpha_3 > \alpha/2$ imply $\beta_2+ \beta_3=0$, i.e., $\beta_2=0$. Therefore  $\beta_2=0=\beta_4$ imply $\langle x_2 \rangle  \sim \langle x_4 \rangle$, which is a contradiction and this completes the proof.
\end{proof}

Thus, it follows from strong perfect graph theorem and Lemma \ref{3-factor-no-odd-cycle} and Lemma \ref{3-factor-no-odd-cycle-complement}, that if $n=p^\alpha qr$, then $\mathbb{AG}(\mathbb{Z}_n)$ is perfect.

Thus, the case when $n$ has three distinct prime factors is complete. Now, we focus on the case, when $n$ has two distinct prime factors.

\begin{lemma}\label{2-factor-no-odd-cycle}
	If $n={p}^{\alpha}{q}^{\beta}$, then $\mathbb{AG}(\mathbb{Z}_n)$ has no induced odd cycle of length greater than $3$.	
\end{lemma}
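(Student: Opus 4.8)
The plan is to translate the problem into lattice-point geometry and then run an extremal argument. By Proposition \ref{adj}, writing each vertex of $\mathbb{AG}(\mathbb{Z}_n)$ as $\langle p^{a}q^{b}\rangle$ with $(a,b)\in\{0,\dots,\alpha\}\times\{0,\dots,\beta\}\setminus\{(0,0),(\alpha,\beta)\}$, two vertices $(a_1,b_1)$ and $(a_2,b_2)$ are adjacent precisely when $a_1+a_2\ge\alpha$ and $b_1+b_2\ge\beta$ hold simultaneously; thus adjacency is the conjunction of two independent sum-threshold conditions, one in each coordinate. Suppose for contradiction that $C:\langle x_1\rangle\sim\cdots\sim\langle x_t\rangle\sim\langle x_1\rangle$ is an induced cycle with $t\ge5$, where $x_i=p^{a_i}q^{b_i}$. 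I will not use the parity of $t$, so this in fact establishes the stronger statement that there is no induced cycle of length $\ge 5$ at all.

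First I would exploit the coordinate that is extremal. Choose $x_1$ with $a_1=\max_i a_i$. The key observation is that every $x_j$ has a cycle-neighbour $x_k$, so $a_j+a_k\ge\alpha$, and since $a_1\ge a_k$ we obtain $a_1+a_j\ge\alpha$ for every $j\ne 1$. Hence the $a$-threshold against $x_1$ is automatic, so within $C$ we have $\langle x_1\rangle\sim\langle x_j\rangle$ if and only if $b_1+b_j\ge\beta$: adjacency to the $a$-maximal vertex is governed by the $b$-coordinate alone. Symmetrically, picking $x_r$ with $b_r=\max_i b_i$, adjacency to $x_r$ is governed by the $a$-coordinate alone. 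A short check shows $x_1\ne x_r$ (otherwise that vertex would be adjacent to all $t-1\ge 4$ others, impossible in a cycle) and $x_1\sim x_r$ (both threshold conditions hold by maximality), so $x_r$ is a cycle-neighbour of $x_1$; reversing orientation if needed, I relabel $x_r=x_2$.

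Reading off the two neighbourhoods then yields two families of strict inequalities: since the neighbours of $x_1$ are exactly $x_2,x_t$, we get $b_1+b_j<\beta$ for $j\in\{3,\dots,t-1\}$; since the neighbours of $x_2$ are exactly $x_1,x_3$, we get $a_2+a_j<\alpha$ for $j\in\{4,\dots,t\}$. The finishing move is to show that the pair $x_3,x_t$ — which is a non-edge because $t\ge5$ — is forced to be an edge. Indeed, the edge $x_2\sim x_3$ gives $a_3\ge\alpha-a_2$, the edge $x_1\sim x_t$ gives $b_t\ge\beta-b_1$, while the edge $x_{t-1}\sim x_t$ together with $a_{t-1}<\alpha-a_2$ yields $a_t>a_2$, and the edge $x_3\sim x_4$ together with $b_4<\beta-b_1$ yields $b_3>b_1$. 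Adding, $a_3+a_t>\alpha$ and $b_3+b_t>\beta$, so $\langle x_3\rangle\sim\langle x_t\rangle$, a contradiction.

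I expect the main obstacle to be the second paragraph: recognising that adjacency to a coordinate-extremal vertex collapses to a single-coordinate threshold, and then pinning down that the two extremal vertices are themselves adjacent (so that they may be placed consecutively on $C$). Once the two neighbourhood families are in hand, the contradiction is a direct computation; the only care needed is to confirm that the indices $3,4,t-1,t$ all fall in the declared ranges, which is exactly where the hypothesis $t\ge5$ is consumed (for $t=5$ one has $x_4=x_{t-1}$, and the argument still goes through).
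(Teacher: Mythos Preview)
Your argument is correct, and it actually proves more than the lemma claims: it rules out \emph{all} induced cycles of length at least $5$, not just the odd ones. The paper's proof is genuinely different. It works with the midpoint thresholds $\alpha/2$ and $\beta/2$: Claim~1 shows every $x_i$ satisfies $\alpha_i>\alpha/2$ or $\beta_i>\beta/2$, Claim~2 (after a fairly lengthy induction on $t$) shows no $x_i$ satisfies both $\alpha_i\ge\alpha/2$ and $\beta_i\ge\beta/2$, and together these force each vertex into one of two ``types'' which must alternate around the cycle, contradicting the oddness of $t$. So the paper's argument is a two-colouring/parity argument and genuinely consumes the hypothesis that $t$ is odd. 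Your extremal approach bypasses the thresholds entirely: by locating a max-$a$ vertex and a max-$b$ vertex, noting that adjacency to each collapses to a one-coordinate condition, and pinning them down as consecutive on $C$, you harvest exactly the four inequalities needed to force the chord $\langle x_3\rangle\sim\langle x_t\rangle$. This is shorter, cleaner, and parity-free; the trade-off is that the paper's Claim~1/Claim~2 template is recycled almost verbatim for the complement graph in Lemma~\ref{2-factor-no-odd-cycle-complement}, whereas your method would need to be reworked there since non-adjacency in the complement is a conjunction rather than a disjunction.
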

\begin{proof}
If possible, let $\mathbb{AG}(\mathbb{Z}_n)$ has an induced odd cycle $C: \langle x_1\rangle \sim \langle x_2 \rangle \sim \cdots \sim \langle x_t \rangle \sim \langle x_1\rangle $, where $x_i=p^{\alpha_{i}}q^{\beta_i}$ for $i=1,2,\ldots,t$.

{\it Claim 1:} For all $i\in \{1,2,\ldots,t\}$, either $\alpha_{i}> \alpha/2$ or $\beta_{i}> \beta/2$.

{\it Proof of Claim 1:} If $\alpha_{i}\leq \alpha/2$ and $\beta_{i}\leq \beta/2$ for some $i$, then as $\langle x_i\rangle \sim \langle x_{i+1}\rangle$, we have $\alpha_{i+1}\geq \alpha/2$ and $\beta_{i+1}\geq \beta/2$. Similarly, as $\langle x_i\rangle \sim \langle x_{i-1}\rangle$, we have $\alpha_{i-1}\geq \alpha/2$ and $\beta_{i-1}\geq \beta/2$. But this implies $\alpha_{i+1}+\alpha_{i-1}\geq \alpha$ and $\beta_{i+1}+\beta_{i-1}\geq \beta$, i.e., $\langle x_{i-1}\rangle \sim \langle x_{i+1}\rangle$, a contradiction. Thus the claim holds. 

In Claim 1, we show that for any $i$, either $\alpha_{i}$ or $\beta_{i}$ is greater than $\alpha/2$ or $\beta/2$ respectively. In the next claim, we show that both of them can not be greater or equal to $\alpha/2$ and $\beta/2$ simultaneously.

{\it Claim 2:} For any $i$, both $\alpha_i\geq \alpha/2$ and $\beta_i\geq \beta/2$ can not hold.

{\it Proof of Claim 2:} Without loss of generality, suppose $\alpha_1\geq \alpha/2$ and $\beta_1\geq \beta/2$. As $\langle x_1\rangle \not\sim \langle x_3\rangle$, we have either $\alpha_1+\alpha_3<\alpha$ or $\beta_1+\beta_3<\beta$, i.e., $\alpha_3<\alpha/2$ or $\beta_3<\beta/2$. Again, without loss of generality, we assume that $\alpha_3<\alpha/2$. So, by Claim 1, we get $\beta_3>\beta/2$. As $\langle x_3 \rangle$ is adjacent to both $\langle x_2\rangle$ and $\langle x_4 \rangle$, we have $\alpha_2,\alpha_4>\alpha/2$. As $\langle x_1 \rangle \not\sim \langle x_4 \rangle$ and $\alpha_1,\alpha_4\geq \alpha/2$ and $\beta_1\geq \beta/2$, we have $\beta_4<\beta/2$. Again as $\langle x_4 \rangle \sim \langle x_5 \rangle$, we have $\beta_5>\beta/2$.

Here $C$ is a $t$-cycle with $t$ odd and $t\geq 5$. We show, by strong induction, that for any odd value of $t\geq 5$, we get a contradiction. 

We start with $t=5$, i.e., $\langle x_1 \rangle \sim \langle x_5 \rangle$. As $\langle x_1 \rangle \not\sim \langle x_4 \rangle$ and $\alpha_1,\alpha_4\geq \alpha/2$, we have $\beta_1+\beta_4<\beta$. Again as $\langle x_4 \rangle \sim \langle x_5 \rangle$, we have $\beta_4+\beta_5\geq \beta$. Thus, we get $\beta_5>\beta_1$. As $\langle x_1 \rangle \sim \langle x_2 \rangle$, we have $\beta_1+\beta_2\geq \beta$, i.e., $\beta_2+\beta_5>\beta$. Thus as $\langle x_2 \rangle \not\sim \langle x_5 \rangle$, we must have $\alpha_2+\alpha_5<\alpha$. Also, as $\langle x_1 \rangle \sim \langle x_5 \rangle$, we have $\alpha_1+\alpha_5\geq \alpha$. Thus we must have $\alpha_1>\alpha_2$. Similarly, $\langle x_2 \rangle \sim \langle x_3 \rangle$ implies $\alpha_2+\alpha_3\geq \alpha$, i.e., $\alpha_1+\alpha_3>\alpha$. On the other hand, as $\beta_1,\beta_3\geq \beta/2$, we have $\beta_1+\beta_3\geq \beta$. Thus we have $\langle x_1 \rangle \sim \langle x_3 \rangle$. Hence we get a contradiction for $t=5$.

For $t>5$, as $\langle x_1 \rangle \not\sim \langle x_5 \rangle$ and $\beta_1,\beta_5\geq \beta/2$ and $\alpha_1\geq \alpha/2$, we have $\alpha_5<\alpha/2$. Thus the induction hypothesis is: For all odd $k$ satisfying $1<k<t-2$, 
$$\begin{array}{ccc}
\alpha_i<\alpha/2, ~1<i\leq k, ~i \mbox{ is odd} & \mbox{and }& \beta_{i}\geq \beta/2, ~1\leq i \leq k, ~i \mbox{ is odd}\\
\alpha_j\geq \alpha/2, ~2<j\leq k-1, ~j \mbox{ is even} & \mbox{and }& \beta_{j}< \beta/2, ~2< j \leq k-1, ~j \mbox{ is even}\\
\end{array}$$

Now $\langle x_{k+1}\rangle\sim \langle x_k\rangle$ and $\alpha_k <\alpha/2$ imply $\alpha_{k+1}>\alpha/2$. Similarly,  $\langle x_1\rangle \not\sim \langle x_{k+1}\rangle$ and $\beta_1 \geq \beta/2 $  implies $\beta_{k+1}<\beta/2$ and $\langle x_{k+1}\rangle \sim \langle x_{k+2}\rangle$ implies $\beta_{k+2}>\beta/2$. As $k+2\leq t-2$, we have $\langle x_1\rangle \not\sim \langle x_{k+2}\rangle$ and $\beta_1,\beta_{k+2}>\beta/2$, which implies $\alpha_{k+2}<\alpha/2$. Thus, by induction, we have 
$$\begin{array}{c}
\mbox{For odd }i \mbox{ with }1<i<t, \alpha_{i}< \alpha/2 \mbox{ and, for odd }i \mbox{ with }1\leq i\leq t, \beta_i\geq \beta/2\\
\mbox{For even }j \mbox{ with }j>2, \alpha_{j}\geq \alpha/2 \mbox{ and } \beta_{j}<\beta/2.
\end{array}$$
Now, $\langle x_2 \rangle \not\sim \langle x_t \rangle$ implies either $\beta_2+\beta_t<\beta$ or $\alpha_2+\alpha_t< \alpha$ or both. As $t$ is odd, $t-1$ is even and hence $\alpha_1,\alpha_{t-1}\geq\alpha/2$ and $\beta_1>\beta/2$. Thus $\langle x_1 \rangle \not\sim \langle x_{t-1} \rangle$ implies $\beta_1+\beta_{t-1}<\beta$ and $\langle x_t \rangle\sim \langle x_{t-1} \rangle$ implies $\beta_t+\beta_{t-1}\geq \beta$. Therefore $\beta_t>\beta_1$.

Again $\langle x_1 \rangle \sim \langle x_{2} \rangle$ implies $\beta_1+\beta_2\geq \beta$, i.e., $\beta_t+\beta_2>\beta$. Now, as $\langle x_2 \rangle \not\sim \langle x_{t} \rangle$, we must have $\alpha_2+\alpha_t<\alpha$.

Also $\langle x_1 \rangle \sim \langle x_{t} \rangle$ implies $\alpha_1+\alpha_t\geq \alpha$. Therefore $\alpha_1>\alpha_2$. Similarly $\langle x_2 \rangle \sim \langle x_{3} \rangle$ implies $\alpha_2+\alpha_3\geq \alpha$. Thus $\alpha_1+\alpha_3> \alpha$. Again, as $\beta_1,\beta_3>\beta/2$, we have $\langle x_1 \rangle \sim \langle x_{3} \rangle$, a contradiction. Hence Claim 2 holds good.

From Claim 1 and 2, we see that for any $i$, both $\alpha_i,\beta_i$ can not be simultaneously `greater or equal' or `lesser or equal' to $\alpha/2$ and $\beta/2$ respectively. So for any $i$, either $\alpha_i<\alpha/2,\beta_i > \beta/2$ or $\alpha_i> \alpha/2,\beta_i< \beta/2$ holds. Without loss of generality, let $\alpha_1<\alpha/2,\beta_1> \beta/2$.

Now, as $\langle x_1 \rangle \sim \langle x_{2} \rangle$, we have $\alpha_1+\alpha_2\geq \alpha$, which implies $\alpha_2>\alpha/2$, i.e., $\beta_2<\beta/2$ (by Claim 2). Similarly $\langle x_2 \rangle \sim \langle x_{3} \rangle$ implies $\beta_3>\beta/2$, i.e., $\alpha_3<\alpha/2$ (by Claim 2). Proceeding this way, we get 
$$\begin{array}{c}
\mbox{If }i\mbox{ is odd, }\alpha_i<\alpha/2 \mbox{ and }\beta_i> \beta/2\\
\mbox{If }i\mbox{ is even, }\alpha_i> \alpha/2\mbox{ and }\beta_i< \beta/2
\end{array}$$
As $t$ is odd, we have $\alpha_t<\alpha/2$. Also, as $\langle x_1 \rangle \sim \langle x_{t} \rangle$, we have $\alpha_1+\alpha_t\geq \alpha$. However as $\alpha_1,\alpha_t<\alpha/2$, we get a contradiction. Thus $\mathbb{AG}(\mathbb{Z}_n)$ has no induced odd cycle of length greater than $3$.
\end{proof}

\begin{lemma}\label{2-factor-no-odd-cycle-complement}
	If $n={p}^{\alpha}{q}^{\beta}$, then $\mathbb{AG}^c(\mathbb{Z}_n)$ has no induced odd cycle of length greater than $3$.
\end{lemma}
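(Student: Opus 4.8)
The plan is to mirror the proof of Lemma~\ref{2-factor-no-odd-cycle}, since passing from $\mathbb{AG}(\mathbb{Z}_n)$ to its complement only interchanges the roles played by consecutive and non-consecutive vertices of the cycle; the same phenomenon already occurred between Lemma~\ref{3-factor-no-odd-cycle} and Lemma~\ref{3-factor-no-odd-cycle-complement}, where the claim statements were identical and only the forcing arguments changed. Suppose, for contradiction, that $\mathbb{AG}^c(\mathbb{Z}_n)$ has an induced odd cycle $C:\langle x_1\rangle\sim\cdots\sim\langle x_t\rangle\sim\langle x_1\rangle$ of length $t\geq 5$, with $x_i=p^{\alpha_i}q^{\beta_i}$. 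Since $\langle a\rangle\sim\langle b\rangle$ in $\mathbb{AG}^c(\mathbb{Z}_n)$ iff $n\nmid ab$, the edges of $C$ give, for consecutive indices, $\alpha_i+\alpha_{i+1}<\alpha$ or $\beta_i+\beta_{i+1}<\beta$, while the absence of chords gives, for non-consecutive indices, $\alpha_i+\alpha_j\geq\alpha$ and $\beta_i+\beta_j\geq\beta$ (indices mod $t$).

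I would first prove the same two claims as in Lemma~\ref{2-factor-no-odd-cycle}. Claim~1: for every $i$, $\alpha_i>\alpha/2$ or $\beta_i>\beta/2$. If instead $\alpha_i\leq\alpha/2$ and $\beta_i\leq\beta/2$, then $\langle x_i\rangle$ is non-adjacent to the non-consecutive vertices $\langle x_{i+2}\rangle,\langle x_{i+3}\rangle$, so $n\mid x_ix_{i+2}$ and $n\mid x_ix_{i+3}$ force $\alpha_{i+2},\alpha_{i+3}\geq\alpha/2$ and $\beta_{i+2},\beta_{i+3}\geq\beta/2$; but then $n\mid x_{i+2}x_{i+3}$, i.e.\ $\langle x_{i+2}\rangle\nsim\langle x_{i+3}\rangle$, contradicting that these two are consecutive. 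Claim~2: for every $i$, not both $\alpha_i\geq\alpha/2$ and $\beta_i\geq\beta/2$. This is the mirror of Claim~2 of Lemma~\ref{2-factor-no-odd-cycle}, obtained by interchanging the consecutive ``or/strict'' relation with the non-consecutive ``and/non-strict'' relation throughout, and is again proved by strong induction on the odd length $t$. I expect this inductive Claim~2 to be the main obstacle, since in Lemma~\ref{2-factor-no-odd-cycle} it is by far the longest and most delicate step.

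Combining the two claims yields a clean dichotomy: for every $i$ exactly one of the cases $\alpha_i>\alpha/2,\ \beta_i<\beta/2$ (call it type $A$) or $\alpha_i<\alpha/2,\ \beta_i>\beta/2$ (type $B$) holds; in particular the boundary value $\alpha_i=\alpha/2$ is excluded, since it would force $\beta_i>\beta/2$ by Claim~1 and hence violate Claim~2. I would then observe that any two non-consecutive vertices must have opposite type: if $\langle x_i\rangle,\langle x_j\rangle$ were both of type $A$ then $\beta_i+\beta_j<\beta$, and if both of type $B$ then $\alpha_i+\alpha_j<\alpha$, either way giving $n\nmid x_ix_j$, i.e.\ $\langle x_i\rangle\sim\langle x_j\rangle$, contradicting that they are non-consecutive.

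Finally, this opposite-type requirement is exactly a proper $2$-colouring (by type) of the graph whose edges are the non-consecutive pairs of $C$, namely the complement $\overline{C_t}$ of the cycle. Since $t\geq 5$ is odd, $\overline{C_t}$ is non-bipartite (for $t=5$ it is again $C_5$, and for $t\geq 7$ it contains the triangle on $\{1,3,5\}$), so no such $2$-colouring exists, a contradiction. Hence $\mathbb{AG}^c(\mathbb{Z}_n)$ has no induced odd cycle of length greater than $3$. Together with Lemma~\ref{2-factor-no-odd-cycle} and the strong perfect graph theorem, this shows that $\mathbb{AG}(\mathbb{Z}_n)$ is perfect when $n=p^{\alpha}q^{\beta}$.
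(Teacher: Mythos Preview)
Your outline follows the paper's structure through Claims~1 and~2: the paper proves exactly these two claims, with Claim~1 argued identically to your sketch. For Claim~2 you only hand-wave, and your suggestion to ``mirror'' the strong induction of Lemma~\ref{2-factor-no-odd-cycle} is not automatic, since in $C_t$ each vertex has two consecutive neighbours but $t-3$ non-consecutive ones, so the roles of edges and non-edges are genuinely asymmetric for $t>5$. The paper's actual proof of Claim~2 in the complement is in fact shorter than the one in Lemma~\ref{2-factor-no-odd-cycle}: it is not an induction but a direct argument split into $t=5$ and $t>5$. For $t>5$, assuming $\alpha_1,\beta_1\geq\alpha/2,\beta/2$, the edges $\langle x_1\rangle\sim\langle x_2\rangle$ and $\langle x_1\rangle\sim\langle x_t\rangle$ quickly force (after a WLOG) $\alpha_2<\alpha/2$ and $\beta_t<\beta/2$; then the non-edges from $x_2$ and $x_t$ to $x_4,x_5$ give $\alpha_4,\alpha_5>\alpha/2$ and $\beta_4,\beta_5>\beta/2$, contradicting $\langle x_4\rangle\sim\langle x_5\rangle$.

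Your endgame after the two claims, however, is genuinely nicer than the paper's. The paper also derives the type-$A$/type-$B$ dichotomy, but then finishes by ad hoc vertex-chasing: starting from $x_1$ of type $B$, it forces $x_3,x_4$ to be type $A$, then $x_2$ type $B$, then $x_5$ type $A$, and finally $\beta_3+\beta_5<\beta$ contradicts $\langle x_3\rangle\nsim\langle x_5\rangle$. Your observation that the dichotomy is precisely a proper $2$-colouring of $\overline{C_t}$, together with the non-bipartiteness of $\overline{C_t}$ for odd $t\geq 5$, subsumes this in one line and is uniform in $t$.
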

\begin{proof}
We start by noting that $\langle a \rangle \sim \langle b \rangle$ in $\mathbb{AG}^c(\mathbb{Z}_n)$ if and only if $n\nmid ab$. If possible, let $\mathbb{AG}^c(\mathbb{Z}_n)$ has an induced odd cycle $C: \langle x_1\rangle \sim \langle x_2 \rangle \sim \cdots \sim \langle x_t \rangle \sim \langle x_1\rangle $, where $x_i=p^{\alpha_{i}}q^{\beta_i}$ for $i=1,2,\ldots,t$.

{\it Claim 1:} For all $i\in \{1,2,\ldots,t\}$, either $\alpha_{i}> \alpha/2$ or $\beta_{i}> \beta/2$.

{\it Proof of Claim 1:} If $\alpha_{i}\leq \alpha/2$ and $\beta_{i}\leq \beta/2$ for some $i$, then as $\langle x_i\rangle \not\sim \langle x_{i+2}\rangle$ and $\langle x_i\rangle \not\sim \langle x_{i+3}\rangle$, we have $\alpha_{i+2},\alpha_{i+3}\geq \alpha/2$ and $\beta_{i+2},\beta_{i+3}\geq \beta/2$. But this imply that $\langle x_{i+2}\rangle \not\sim \langle x_{i+3}\rangle$ in $\mathbb{AG}^c(\mathbb{Z}_n)$, a contradiction. Hence Claim 1 holds.
In Claim 1, we show that for any $i$, either $\alpha_{i}$ or $\beta_{i}$ is greater than $\alpha/2$ or $\beta/2$ respectively. In the next claim, we show that both of them can not be greater or equal to $\alpha/2$ and $\beta/2$ simulatneously.

{\it Claim 2:} For any $i$, both $\alpha_i\geq \alpha/2$ and $\beta_i\geq \beta/2$ can not hold.

{\it Proof of Claim 2:} Without loss of generality, suppose $\alpha_1\geq \alpha/2$ and $\beta_1\geq \beta/2$. $\langle x_1\rangle \sim \langle x_2\rangle $ implies either $\alpha_1 +\alpha_2 < \alpha$ or $\beta_1+ \beta_2 <\beta$ or both.  Again, without loss of generality, we assume that $\alpha_1 +\alpha_2 < \alpha$, i.e., $\alpha_2 < \alpha/2$. Now $\langle x_2\rangle \nsim \langle x_t\rangle$ implies $\alpha_2 + \alpha_t \geq \alpha$, i.e., $\alpha_t > \alpha/2$. 

At first we assume that $t=5$. Therefore $\langle x_1\rangle \sim \langle x_5\rangle$ and $\alpha_1 ,\alpha_5 \geq \alpha/2$ imply $\beta_1 + \beta_5 < \beta$. Now $\langle x_1\rangle \nsim \langle x_3\rangle$ imply $\alpha_1+ \alpha_3 \geq \alpha$, so $\alpha_1 +\alpha_2 < \alpha$ implies $\alpha_3 > \alpha_2$. Again $\langle x_3\rangle \sim \langle x_4\rangle$ imply either $\alpha_3 +\alpha_4 < \alpha$ or $\beta_3+ \beta_4 < \beta$ or both. 
Now $\langle x_2\rangle \nsim \langle x_4\rangle$ imply $\alpha_2 + \alpha_4 \geq \alpha$. If $\alpha_3 + \alpha_4< \alpha$, then we have $\alpha_2 > \alpha_3$, which is a contradiction as we already have $\alpha_3> \alpha_2$.
 Now $\langle x_1\rangle \nsim \langle x_4\rangle$ implies $\beta_1+\beta_4 \geq \beta$. If $\beta_3+ \beta_4 < \beta$, then we have $\beta_1 > \beta_3$. Now $\langle x_3\rangle \nsim \langle x_5\rangle$ implies $\beta_3 +\beta_5 \geq \beta$, therefore $\beta_1 + \beta_5 > \beta$, which contradicts the condition $\beta_1+ \beta_5 < \beta$. So for $t=5$ the Claim 2 is true.

Now assume that $t>5$. As $\alpha_1, \alpha_t \geq \alpha/2$ and $\langle x_1\rangle \sim \langle x_t\rangle$, we have $\beta_1 + \beta_t <\beta$, i.e., $\beta_t <\beta/2$ as $\beta_1 \geq \beta/2$. Now $\alpha_2 < \alpha/2$ and $\langle x_4\rangle ,\langle x_5\rangle  \nsim \langle x_2\rangle $ imply $\alpha_4, \alpha_5 > \alpha/2$, hence $\alpha_4+ \alpha_5 > \alpha$.
Again $\beta_t <\beta/2$ and $\langle x_4\rangle,\langle x_5\rangle \nsim \langle x_t\rangle$ imply $\beta_4, \beta_5 > \beta/2$, hence $\beta_4+\beta_5> \beta$, which is a contradiction as $\langle x_4\rangle \sim \langle x_5\rangle$. Hence Claim 2 holds for all odd $t \geq 5$.

From Claim 1 and 2, we see that for any $i$, both $\alpha_i,\beta_i$ can not be simultaneously `greater or equal' or `lesser or equal' to $\alpha/2$ and $\beta/2$ respectively. So for any $i$, either $\alpha_i<\alpha/2,\beta_i>\beta/2$ or $\alpha_i > \alpha/2,\beta_i< \beta/2$ holds. Without loss of generality, let $\alpha_1<\alpha/2,\beta_1 > \beta/2$.

Now $\langle x_3\rangle, \langle x_4\rangle \nsim \langle x_1\rangle$ imply $\alpha_3, \alpha_4 >\alpha/2$ and hence by Claim 2 we have $\beta_3, \beta_4 < \beta/2$. As $\langle x_2\rangle \nsim \langle x_4\rangle$, so $\beta_2 >\beta/2$ and by Claim 2 we have $\alpha_2< \alpha/2$. Now $\langle x_2\rangle \nsim \langle x_5\rangle$ implies $\alpha_5 >\alpha/2$. Then by Claim 2 we have $\beta_5< \beta/2$, but $\beta_3< \beta/2$ imply $\beta_3+ \beta_5 <\beta$, which is a contradiction as $\langle x_3\rangle \nsim \langle x_5\rangle$. Thus $\mathbb{AG}^c(\mathbb{Z}_n)$ has no induced odd cycle of length greater than $3$.
\end{proof}

Thus from Lemma \ref{2-factor-no-odd-cycle} and Lemma \ref{2-factor-no-odd-cycle-complement}, we have if $n={p}^{\alpha}{q}^{\beta}$, then $\mathbb{AG}(\mathbb{Z}_n)$ is perfect.	

Now, we deal with the last case when $n$ is a prime power.
\begin{lemma}
	If $n={p}^{\alpha}$, then $\mathbb{AG}(\mathbb{Z}_n)$ is perfect.	
\end{lemma}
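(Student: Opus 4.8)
The plan is to use Proposition \ref{adj} to describe the graph explicitly and then recognize it as a \emph{split graph}, from which perfectness follows. By Proposition \ref{adj}, the vertices of $\mathbb{AG}(\mathbb{Z}_{p^\alpha})$ are exactly $\langle p^i\rangle$ for $1\le i\le \alpha-1$, and two distinct vertices satisfy $\langle p^i\rangle\sim\langle p^j\rangle$ if and only if $p^\alpha\mid p^{i+j}$, i.e. if and only if $i+j\ge\alpha$. Thus the whole adjacency relation is governed by the single threshold condition $i+j\ge\alpha$ on the integer labels $i$.

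The key observation is that this threshold description makes $\mathbb{AG}(\mathbb{Z}_{p^\alpha})$ a split graph. I would partition the vertex set as
$$K=\{\langle p^i\rangle : 2i\ge\alpha\}\quad\text{and}\quad I=\{\langle p^i\rangle : 2i<\alpha\}.$$
First I would check that $K$ is a clique: for distinct $i,j$ with $2i\ge\alpha$ and $2j\ge\alpha$ one has $i+j\ge\alpha$, so $\langle p^i\rangle\sim\langle p^j\rangle$. Next I would check that $I$ is independent: for distinct labels $i<j$ with $2j<\alpha$ one has $i+j<2j<\alpha$, so $\langle p^i\rangle\not\sim\langle p^j\rangle$. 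The boundary label $i=\alpha/2$, possible only when $\alpha$ is even, is absorbed into $K$ and causes no difficulty.

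With the split structure in hand, I would finish via the strong perfect graph theorem, in keeping with the preceding lemmas. To rule out an induced odd cycle $C$ of length $t\ge5$ in $\mathbb{AG}(\mathbb{Z}_{p^\alpha})$, note that an induced cycle of length $\ge4$ is triangle-free, so at most two of its vertices can lie in the clique $K$; the remaining $t-2\ge3$ vertices then lie in the independent set $I$. But in a cycle, no two $I$-vertices may be consecutive (they are pairwise non-adjacent), so between each cyclically consecutive pair of $I$-vertices there must sit a distinct $K$-vertex, forcing $|C\cap K|\ge|C\cap I|$ and contradicting $|C\cap K|\le2<3\le|C\cap I|$. For the complement $\mathbb{AG}^c(\mathbb{Z}_{p^\alpha})$, I would use that the complement of a split graph is again split (the roles of $K$ and $I$ simply interchange), so the identical counting argument forbids induced odd cycles of length $\ge5$ there as well. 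By the strong perfect graph theorem, $\mathbb{AG}(\mathbb{Z}_{p^\alpha})$ is perfect.

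The main step, and essentially the entire content, is recognizing the split-graph structure through the threshold description $i+j\ge\alpha$; once that is established, perfectness is immediate (split graphs are perfect), and the counting argument above merely renders this self-contained. I do not anticipate a serious obstacle: the only points requiring care are the boundary behaviour at $i=\alpha/2$ when $\alpha$ is even and the verification that the counting argument transfers verbatim to the complement.
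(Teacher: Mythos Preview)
Your proof is correct, but it proceeds along a different line from the paper's. The paper argues directly from the threshold description: assuming an induced odd cycle $\langle p^{k_1}\rangle\sim\cdots\sim\langle p^{k_t}\rangle\sim\langle p^{k_1}\rangle$ with $t\ge5$, it sums the $t$ adjacency inequalities $k_i+k_{i+1}\ge\alpha$ to get $2\sum k_i\ge t\alpha$, and sums the $t$ non-adjacency inequalities $k_i+k_{i+2}<\alpha$ to get $2\sum k_i<t\alpha$, a contradiction; the complement is handled by the same trick with the inequalities reversed. Your route instead extracts the structural reason behind these inequalities: the condition $i+j\ge\alpha$ makes the graph split (indeed, a threshold graph) with clique part $\{2i\ge\alpha\}$ and independent part $\{2i<\alpha\}$, and you then give the standard pigeonhole argument that a split graph and its complement contain no induced $C_t$ for $t\ge5$. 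Your approach is more conceptual and immediately explains why the result holds (split graphs are perfect), while the paper's summation is fully self-contained and avoids any appeal to the split/threshold framework. One small wording point: when you write ``the remaining $t-2\ge3$ vertices then lie in $I$'', you mean \emph{at least} $t-2$; the subsequent counting goes through unchanged with this reading.
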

\begin{proof}
In this case, the vertices are $\langle p \rangle, \langle p^2 \rangle,\ldots, \langle {p}^{\alpha-1} \rangle$ and two vertices $\langle {p}^{k} \rangle$ and $\langle {p}^{l} \rangle$ are adjacent if and only if $k+l\geq \alpha$.

If possible, let $C: \langle {p}^{k_1} \rangle \sim \langle {p}^{k_2} \rangle \sim \cdots \sim \langle {p}^{k_t} \rangle \sim \langle {p}^{k_1} \rangle$ be an induced odd cycle of length $t\geq 5$. Then from adjacency and non-adjacency conditions, we have the following two sets of relations. Adding them, we get a contradiction:
$$\begin{array}{ccc}
k_1+k_2\geq \alpha & ~~~~~~~~~~~~& k_1+k_3<\alpha\\
k_2+k_3\geq \alpha & & k_2+k_4<\alpha\\
\vdots & & \vdots \\
k_{t-1}+k_t\geq \alpha & & k_{t-1}+k_1<\alpha\\
k_t+k_1\geq \alpha & & k_t+k_2<\alpha\\
& & \\
\hline & & \\
2(k_1+k_2+\cdots+k_t)\geq t\alpha & &  2(k_1+k_2+\cdots+k_t)< t\alpha
\end{array}$$
Thus $\mathbb{AG}(\mathbb{Z}_n)$ has no induced odd cycle $C$ of length $t\geq 5$. Proceeding similarly, it can be shown that $\mathbb{AG}^c(\mathbb{Z}_n)$ also has no induced odd cycle of length $t\geq 5$. Hence $\mathbb{AG}(\mathbb{Z}_n)$ is perfect.
\end{proof}

Combining all the results in this section, we get the proof of Theorem \ref{main-theorem}.

\section*{Acknowledgement}
The first and third authors acknowledge the funding of DST-SERB-SRG Sanction no. $SRG/2019/000475$ and $SRG/2019/000684$, Govt. of India. The second author is supported by the PhD fellowship of CSIR (File no. 08/155(0086)/2020-EMR-I), Govt. of India.

\end{document}